\newcommand{\pred}[1]{\chr[#1]}
\newcommand{\nrm}[1]{\left\Vert #1 \right\Vert}
\newcommand{\E}{\mathop{\mathbb{E}}}
\newcommand{\R}{\mathbb{R}}
\newcommand{\N}{\mathbb{N}}
\newcommand{\paren}[1]{\left( #1 \right)}
\newcommand{\set}[1]{\left\{ #1 \right\}}
\newcommand{\abs}[1]{\left| #1 \right|}
\newcommand{\beq}{\begin{eqnarray*}}
\newcommand{\eeq}{\end{eqnarray*}}
\newcommand{\beqn}{\begin{eqnarray}}
\newcommand{\eeqn}{\end{eqnarray}}
\newcommand{\ben}{\begin{enumerate}}
\newcommand{\een}{\end{enumerate}}
\newcommand{\bit}{\begin{itemize}}
\newcommand{\eit}{\end{itemize}}
\providecommand{\hide}[1]{}
\newcommand{\eps}{\varepsilon}
\newcommand{\evalat}[2]{\left.#1\right|_{#2}}
\newcommand{\ceil}[1]{\ensuremath{\left\lceil#1\right\rceil}}
\newcommand{\floor}[1]{\ensuremath{\left\lfloor#1\right\rfloor}}
\newcommand{\chr}{\boldsymbol{{1}}} %
\newcommand{\X}{\Omega}
\newcommand{\fat}{\operatorname{fat}}
\newcommand{\faat}{\operatorname{\textup{f\aa t}}}
\newcommand{\med}{\operatorname{med}}
\newcommand{\mean}{\operatorname{mean}}
\newcommand{\maxmin}{\operatorname{max-min}}
\newcommand{\hinge}{\operatorname{Hinge}}
\renewcommand{\d}{\mathrm{d}}
\renewcommand{\phi}{\varphi}
\newcommand{\nrmp}[2]{\nrm{#1}_{L_{#2}(\mu)}}
\newcommand{\covn}{\mathcal{N}}
\newcommand{\nrmpk}[2]{\nrm{#1}_{L_{#2}^{(k)}(\mu)}}
\newcommand{\vc}{\operatorname{vc}}
\newcommand{\sign}{\operatorname{sign}}
\newcommand{\discr}[1]{[#1]_{\gamma}^{\star}}
\newcommand{\maavar}[2]{ 
\overset{\textup{(#1)}}{#2}
}
\newcommand{\disa}{\mathscr{D}}
\newcommand{\setzo}{\set{0,1}}
\newcommand{\setzos}{\set{0,1,\star}}
\newcommand{\st}{^\star}
\newcommand{\Log}{\operatorname{Log}}
\newcommand{\absconv}{\operatorname{absconv}}
\begin{document}

\title{%
Fat-Shattering Dimension
of $k$-fold Aggregations\thanks{A previous
version of this paper was titled ``Fat-shattering dimension of $k$-fold maxima''.}
}

 \author{\name{Idan Attias} \email{idanatti@post.bgu.ac.il}\\ 
 \name{Aryeh Kontorovich} \email{karyeh@cs.bgu.ac.il}\\   
  \addr Department of Computer Science\\
       Ben-Gurion University of the Negev, Beer Sheva, Israel
}

\editor{}

\maketitle

\begin{abstract}
We provide  estimates on the fat-shattering dimension of aggregation rules of real-valued
function classes. The latter 
consists of all ways 
of choosing
$k$ functions, one from each of the $k$
classes, and computing a pointwise function of them, such as the median, mean, and maximum.
The bound is stated in terms of the
fat-shattering dimensions of the component classes.
For linear and affine function classes,
we provide a considerably sharper upper bound
and a matching lower bound,
achieving, in particular, an optimal
dependence on $k$.
Along the way, we 
improve several known results
in addition to
pointing out
and correcting a number of erroneous
claims in the literature.
\end{abstract}

\begin{keywords}
combinatorial dimension,
scale-sensitive dimension,
fat-shattering dimension, aggregation rules,
$k$-fold maximum, ensemble methods
\end{keywords}

\section{Introduction}

The {\em fat-shattering dimension}, also known as
``scale-sensitive''
and
the ``parametrized variant of the $P$-dimension'',
was first proposed by \citet{183126};
its key role in learning theory
lies in characterizing the PAC learnability
of real-valued function classes
\citep{alon97scalesensitive,DBLP:journals/jcss/BartlettL98}.

In this paper, we study the 
behavior of
the fat-shattering dimension
under
various
$k$-fold aggregations.
Let $F_1,\ldots,F_k\subseteq\R^\X$ be real-valued function classes, and $G:\R^k\rightarrow \R$ be an aggregation rule. We consider the 
{\em aggregate}
function class $G(F_1,\ldots,F_k)$, which consists of all 
mappings
$x\mapsto G(f_1(x),\ldots,f_k(x))$, for any $f_1\in F_1,\ldots,f_k\in F_k$. 
Some natural aggreagation rules include
the pointwise $k$-fold maximum, median, mean, and max-min.
We seek to bound
the shattering complexity of
$G(F_1,\ldots,F_k)$
in terms of the constituent
fat-shattering dimensions of the $F_i$.
This question naturally arises in the context of ensemble methods, such as boosting and bagging, where the learner's prediction consists of an aggregation of base learners.

The analogous question regarding
aggregations of VC classes
(VC dimension being the combinatorial
complexity controlling
the learnability of Boolean function classes) have been studied
in detail and largely resolved
\citep{78414,
MR1072253,
DBLP:journals/ipl/EisenstatA07,
DBLP:journals/ipl/Eisenstat09,
DBLP:journals/jmlr/CsikosMK19}.
Furthermore, 
closure properties were also studied in the context of online classification and private PAC learning \citep{alon2020closure,ghazi2021near} for the Littlestone and Threshold dimensions. However, for real-valued functions, this question remained largely uninvestigated.

Our contributions are as follows:
\begin{itemize}
    \item For a natural class of aggregation rules that commute with shifts (see Definition (\ref{def:commutes-shift})),  assuming 
    $\fat_\gamma(F_i)\leq d$, for $1\leq i \leq k$, we show that
    \begin{align*}
    \fat_\gamma(G(F_1,\ldots,F_k))
    \leq O\left(dk\log^2\left(dk\right)\right),
    \qquad
    \gamma>0.
    \end{align*}
The formal statement is given in
\Cref{thm:max-fat}. 
    By using an entirely different approach, for aggregations that are $L$-Lipschitz ($L\geq 1$) in supremum norm (see Definition (\ref{def:lip}))
    and for bounded function classes $F_1,\ldots,F_k \subset [-R,R]^\Omega$ with 
    $\fat_{\eps\gamma}(F_i)\leq d$, for $1\leq i \leq k$, we show that
    \begin{align*}
    \fat_\gamma(G(F_1,\ldots,F_k))
    \leq O\left(dk\Log^{1+\epsilon}
    \frac{LRk}{\gamma}
    \right),
    \qquad
    0<\gamma/L<R \text{ and }0<\eps<\log 2,
    \end{align*}
    where $\Log(x):=\log(e\vee x)$.
The formal statement is given in
\Cref{thm:max-fat-RV}. 
    
    In particular, our proofs hold for the maximum, minimum, median, mean, and max-min aggregations.
    \item For $R$-bounded affine functions and for aggregations that are $L$-Lipschitz in supremum norm, we show the following dimension-free bound,
    \beq
\fat_{\gamma}(G(F_1,\ldots,F_k))
&\le&
O\left(
\frac{
L^2R^2 k\log(k)
}{
\gamma^2
}
\right)
,
\qquad
0<\gamma/L<R
.
\eeq
This result also extends to the hinge-loss class of affine functions. The formal statement is given in
\Cref{thm:max-fat-lin-R}.
In particular, we improve by a log factor 
the estimate of
\citet[Lemma 6]{fefferman2016testing} on the fat-shattering dimension of max-min aggregation of
linear functions.

Furthermore, in Corollary \ref{thm:rademacher} we show an upper bound on the Rademacher complexity of the $k$-fold maximum aggregation of affine functions and hinge-loss affine functions. Our bound scales with $\sqrt{k}$, improving upon \citet{DBLP:journals/jmlr/RavivHO18} where the dependence on $k$ is linear.
\item For affine functions and the 
$k$-fold max
aggregation, we show tight dimension-dependent bounds (up to constants),
\beq
\fat_\gamma(
G_{\max}(F_1,\ldots,F_k))
&=&
\Theta\left(dk\log k\right)
,
\qquad
\gamma>0,
\eeq
where $d$ is the Euclidean dimension. For the formal statements, see \Cref{thm:max-fat-lin-d,thm:logk-lb}.
\end{itemize}

\paragraph{Applications.}
The need to analyze the combinatorial complexity of a $k$-fold maximum of function classes (see (\ref{eq:Fmax}) for the formal definition) arises in a number of diverse settings. 
One natural example is adversarially robust PAC learning to test time attacks for real-valued functions \citep{attias2022improved, attias2023adversarially}. In this setting, the learner observes an i.i.d. labeled sample from an unknown distribution, and the goal is to output a hypothesis with a small error on unseen examples from the same distribution, with high probability. The difference from the standard PAC learning model is that at test time, the learner only observes a corrupted example, while the prediction is tested on the original label. 
Formally, $(x,y)$ is drawn from the unknown distribution, there is an adversary that can map $x$ to $k$ possible corruptions $z$ that are known to the learner. The learner observes only $z$ while its loss is with respect to the original label $y$.
This scenario is naturally captured by the $k$-fold max:
the learner aims to learn the maximum aggregation of the loss classes. \citet{attias2022improved} showed that uniform convergence holds in this case, and so the sample complexity of an empirical risk minimization algorithm is determined by the complexity measure of the $k$-fold maximum aggregation.

Analyzing the $k$-fold maximum arises also in a setting of learning polyhedra with a margin.
\citet{GKKN-nips18} provided a learning algorithm that
represents polyhedra as intersections of bounded affine functions. The sample complexity of the algorithm is determined by the complexity measure of the maximum aggregation of affine function classes.

Another natural example of where the $k$-fold maximum and $k$-fold max-min play a role is in analyzing the convergence of $k$-means clustering. \citet{fefferman2016testing} bounded the max-min aggregation and \citet{klochkov2021robust,biau2008performance,appert2021new,zhivotovskiy2022sharp} bounded the max aggregation.
The main challenge in this setting is bounding the covering numbers of the aggregation over $k$ function classes which can be obtained by bounding the Rademacher complexity or the fat-shattering dimension.

Finally, there are numerous ensemble methods for regression that output some aggregation of base learners, such as the median or mean. 
Examples of these methods include boosting (e.g., \citet{freund1997decision,kegl2003robust}), bagging (bootstrap aggregation) by \citet{breiman1996bagging}, and its extension to the random forest algorithm \citep{breiman2001random}.

\paragraph{Related work.}\label{sec:existing-res}
It was claimed in
\citet[Theorem 12]{DBLP:conf/alt/AttiasKM19}
that
$
\fat_\gamma(
G_{\max}
)
\le
2\log(3k)\sum_{j=1}^k 
\fat_{\gamma}(F_i),
$
but the proof had a mistake (see Section~\ref{sec:discussion});
our 
Open Problem
asks if the general
form of the bound
does holds
(we believe it does at least for the max aggregation).
Using the recent
disambiguation result
of
\citet{DBLP:journals/corr/abs-2107-08444}
presented in
Lemma~\ref{lem:disambig} here,
\citet[Lemma 15]{attias2022improved}
obtained the
bound 
$
\fat_\gamma(
G_{\max})
\le
O\paren{\Log(k)\Log^2(|\X|)\sum_{j=1}^k\fat_\gamma(F_i)}$, where $\Omega$ is the domain of the function classes $F_1,\ldots,F_k$.
The latter is, in general, incomparable
to Theorem~\ref{thm:max-fat}
--- but is clearly
a considerable improvement for large
or infinite $\X$.

Using 
the covering number
results of 
\citet{MR1965359,talagrand2003vapnik}
(see Section~\ref{sec:cov-num}),
\citet[Theorem 6.2]{duan11} obtained a general result, which, when specialized to
$k$-fold maxima, yields
\beqn
\label{eq:duan}
\fat_{\gamma}(G_{\max})
&\le&
O
\left(
{\Log\frac{
k}{\gamma}}
\cdot
\sum_{i=1}^k
\fat_{c\gamma/\sqrt k}(F_i)
\right)
\eeqn
for a universal constant
$c>0$;
(\ref{eq:duan})
is an immediate consequence of
Theorem~\ref{thm:max-cov} (with $p=2$),
Lemma~\ref{lem:talag},
and
Lemma~\ref{lem:MV} in this paper.
Our results improve
over (\ref{eq:duan}) by
removing the dependence on $k$
in the scale of the fat-shattering
dimensions; however, \citeauthor{duan11}'s
general method is applicable to
a wider class of uniformly continuous $k$-fold aggregations.

\citet[Lemma A.2]{srebro2010smoothness}
bounded the fat-shattering dimension
in terms of the Rademacher complexity.
\citet{DBLP:journals/corr/abs-1911-06468}
bounded the Rademacher complexity
of a smooth $k$-fold aggregate, see
also references therein. 
Inspired by
\citet{appert2021new},
\citet{zhivotovskiy2022sharp} has obtained the best known upper bound on the Rademacher complexity of $k$-fold maximum over linear function classes.
\citet{DBLP:journals/jmlr/RavivHO18} upper bounded the Rademacher complexity
of the $k$-fold maximum aggregation of affine functions and hinge-loss affine functions.

\section{Preliminaries}\label{sec:prelim}

\paragraph{Aggregation rules.} 
A $k$-fold {\em aggregation} rule is any 
mapping
$G:\R^k\rightarrow \R$.
Just as $G$ maps $k$-tuples of reals into reals,
it naturally aggregates $k$-tuples of functions
into a single one:
for
$f_1,\ldots,f_k:\Omega\rightarrow \R$,
we define
$G(f_1,\ldots,f_k):\Omega\rightarrow \R$ as the mapping $x \mapsto G(f_1(x),\ldots,f_k(x))$.
Finally, the aggregation extends to $k$-tuples of function classes:
for
$F_1,\ldots,F_k\subseteq\R^\Omega$,
we define
\begin{equation}\label{def:aggregation}
 G(F_1,\ldots,F_k):=
 \set{x\mapsto G(f_1(x),\ldots,f_k(x)):f_i \in F_i
,
i\in[k]
}.
\end{equation}
A canonical example of an aggregation rule is the $k$-fold max,
induced by the mapping 
\begin{align}\label{eq:Fmax}
G_{\max}(x_1,\ldots,x_k)
:=
\max_{i\in[k]}x_i
.  
\end{align}

Next, we consider some properties that an aggregation
rule might possess.

\noindent\textit{Commuting with shifts.} 
We say that an aggregation rule $G$ commutes with shifts if 
\begin{equation}\label{def:commutes-shift}
G(x)-r
=
G(x-r),
\qquad x\in\R^k, r\in\R
.
\end{equation}

\noindent\textit{Lipschitz 
continuity.}
The mapping $G:\R^k\to\R$
is $L$-Lipschitz
with respect to
$\nrm{\cdot}_\infty$ if
\beqn
\label{def:lip}
\abs{G(x)-G(x')}
\le
L\nrm{x-x'}_\infty
=
L\max_{i\in[k]}|x_i-x_i'|
,
\qquad
x,x'\in\R^k.
\eeqn

Many natural aggregation rules possess these properties, such as maximum, minimum, median, mean, and max-min. 
The maximum is defined in (\ref{eq:Fmax}), the minimum is defined  analogously as
\begin{align*}
   G_{\min}(x_1,\ldots,x_k)
   :=
\min_{i\in[k]}x_i,
\end{align*}
the median is defined as 
\begin{align*}
  G_{\med}(x_1,\ldots,x_k)
   :=
\begin{cases}
x_{\frac{k+1}{2}}, & k \text{ is odd},\\
\frac{1}{2}(x_{\frac{k}{2}}+x_{\frac{k+1}{2}}), & k \text{ is even},
\end{cases}  
\end{align*}
and
the mean is defined as 
\begin{align*}
G_{\mean}
(x_1,\ldots,x_k)
   :=
\frac{1}{k}\sum^k_{i=0} x_i.
\end{align*}
We also define
$
G_{\maxmin}
:\R^{\ell\times k}
\to\R
$
as 
\begin{align}
\label{eq:maxmindef}
G_{\maxmin}
(x_{11},\ldots,x_{k\ell})
   :=
   \max_{j\in [\ell]}\min_{i\in[k]}x_{ij};
\end{align}
it is readily verified
to commute with shifts
and Lemma~\ref{lem:max-min}
shows that it is $1$-Lipschitz
with respect to $\nrm{\cdot}_{\infty}$.

\paragraph{Fat-shattering dimension.}
Let $\X$ be a set and $F\subset\R^\X$. For $\gamma>0$,
a set $S=\set{x_1,\ldots,x_m}\subset\X$
is said to be $\gamma$-shattered by
$F$
if
\beqn
\label{eq:gamma-shatter}
\sup_{r\in\R^m}
\;
\min_{y\in\set{-1,1}^m}
\;
\sup_{f\in F}
\;
\min_{i\in[m]}
\;
y_i(f(x_i)-r_i)\ge \gamma.
\eeqn
The $\gamma$-fat-shattering dimension, denoted by $\fat_\gamma(F)$,
is the size of the largest $\gamma$-shattered set (possibly $\infty$).

\paragraph{Fat-shattering dimension at zero.}
As in
\citet{DBLP:journals/tit/GottliebKK14+colt},
we also define the notion
of $\gamma$-shattering at $0$,
where the ``shift'' $r$ 
in (\ref{eq:gamma-shatter})
is constrained to be $0$. Formally, the shattering condition
is
$
\min_{y\in\set{-1,1}^m}
\sup_{f\in F}
\min_{i\in[m]}
y_if(x_i)\ge \gamma
,
$
and the 
we denote
corresponding 
dimension 
by
$\faat_\gamma(F)$.

\citet[Lemma 13]{DBLP:conf/alt/AttiasKM19}
showed that for all $F\subset\R^\X$,
\beqn
\label{eq:fatfat0}
\fat_\gamma(F)
=\max_{r\in\R^\X}\faat_\gamma(F-r),
\qquad
\gamma>0,
\eeqn
where $F-r=\set{f-r; f\in F}$
is the $r$-shifted class (the maximum is 
always achieved).
Lemma~\ref{lem:fat-faat}
presents another, apparently novel, connection between $\fat$
and $\faat$.

\paragraph{Rademacher complexity.}
Let $\mathcal{F}$ be of real-valued function class on the domain space $\mathcal{W}$. Define the empirical Rademacher complexity of $\mathcal{F}$ on a given sequence $(w_1,\ldots,w_n)\in \mathcal{W}^n$ is defined as
\begin{align*}
    \mathcal{R}_n(\mathcal{F} | w_1,\ldots,w_n)
=
\mathbb{E}_{\mathbf{\sigma}} \sup_{f\in \mathcal{F}}\frac1n\sum_{i=1}^n\sigma_i f(w_i),
\end{align*}
where $\sigma=(\sigma_1,\ldots\sigma_n)$ are independent random variables uniformly chosen from $\set{-1,1}$.
The Rademacher complexity of $\mathcal{F}$ with respect to a distribution $\mathcal{D}$ is defined as 
\begin{align*}
\mathcal{R}_n(\mathcal{F})
=
\mathbb{E}_{w_1,\ldots,w_n\sim \mathcal{D}}R_n(\mathcal{F} |w_1,\ldots,w_n).
\end{align*}
It is a classic fact
\citep[Theorem 3.1]{mohri-book2012}
that the Rademacher complexity controls generalization
bounds in a wide range of supervised learning settings.

\paragraph{Covering numbers.}
We start with some background on covering numbers. 
Whenever $\X$ is endowed with a probability measure $\mu$, this induces, 
for $p\in[1,\infty)$ and $f:\Omega\rightarrow \R^k$,
the 
norm 
\beq
\nrmpk{f}{p}^p
=
\E_{X\sim \mu}\nrm{f(X)}_p^p
=
\int_\X \nrm{f(x)}_p^p\d\mu(x)
\eeq
on
$L^{(k)}_p(
\mu):=\set{
f\in(\R^k)^\X:
\nrmpk{f}{p}<\infty
}
$.
When $k=1$, we 
write
$
L_p(
\mu):=
L^{(1)}_p(
\mu)
$.
For $p=\infty$,
$\nrmpk{f}{\infty}$
is the essential supremum of $f$
with respect to $\mu$.
For 
$t>0$
and
$H\subset F\subset L_p(
\mu)$,
we say that $H$ is a $t$-cover
of $F$ under $\nrmp{\cdot}{p}$
if
$
\sup_{f\in F}\inf_{h\in H}\nrmp{f-h}{p}
\le t.
$
The $t$-covering number of $F$,
denoted by $\covn(F,
L_p(\mu),
t)$,
is the cardinality of the smallest $t$-cover
of $F$ (possibly, $\infty$).
We note the obvious relation
\beqn
\label{eq:covn-pq}
p>q
&\implies&
\covn(F,L_p(\mu),t)
\ge
\covn(F,L_q(\mu),t),
\eeqn
which holds for all probability measures $\mu$
and all $t>0$.

We sometimes overload the notation about aggregations by defining $G$ on $k$-tuples of functions (instead on  $k$-tuples of reals), $G: (\R^{\Omega})^k\rightarrow \R^{\Omega}$. 
We say that $G$ is $L$-Lipschitz with respect to $\nrmpk{\cdot}{p}$, if 
\beq
\nrmp{G(f_{1:k})-G(f'_{1:k})}{p}
\le
L\nrmpk{(f_{1:k})-(f'_{1:k})}{p}
,
\qquad
f_{1:k},f'_{1:k}\in(\R^k)^\X.
\eeq

\paragraph{Notation.}
We write $\N=\set{0,1,\ldots}$ to denote the natural numbers.
For $n\in\N$, we write
$[n]:=\set{1,2,\ldots,n}$.
All of our logarithms are base $e$, unless explicitly denoted otherwise.
We use $\max\set{u,v}$
and $u\vee v$ interchangeably,
and write
$\Log(x):=\log(e\vee x)$.
For any function class $F$
over a set $\X$ and $E\subset\X$,
$
F(E)=\evalat{F}{E}
$
denotes the projection 
on (restriction to) $E$.
In line with the common convention
in functional analysis,
absolute numerical constants
will be denoted by letters such as
$C,c$,
whose value
may change from line to line.
Any transformation $\phi:\R\to\R$
may be applied to a function $f\in\R^\X$
via $\phi(f):=\phi\circ f$,
as well as to $F\subset\R^\X$
via $\phi(F):=\set{\phi(f);f\in F}$.
The sign function thresholds at $0$:
$\sign(t)=\pred{t\ge0}$.

\section{Main Results}
Our main results involve upper-bounding
the fat-shattering dimension
of aggregation rules in terms
of the dimensions of the component classes.
We begin with the simplest (to present):
\begin{theorem}[General function classes and aggregations that commute with shifts]
\label{thm:max-fat}
For 
$F_1,\ldots,F_k\subseteq\R^\X$,
and aggregation rule
$G$ that commutes with shifts, (see definition (\ref{def:commutes-shift})),
we have
\beq
\label{eq:max-fat}
\fat_\gamma(G(F_1,\ldots,F_k))
&\le&
25
D_\gamma \log^2(
90
D_\gamma),
\qquad
\gamma>0,
\eeq
where
$D_\gamma:=\sum_{i=1}^k\fat_\gamma(F_i)>0$.
In the degenerate case
where $D_\gamma=0$,
$\fat_\gamma(G)=0$. 

In particular, this result holds for natural aggregation rules, such as maximum, minimum, median, and mean.
\end{theorem}
\paragraph{Remark.}
We made no attempt to optimize the constants;
these are only provided to give a rough
order-of-magnitude sense. In the sequel,
we forgo numerical estimates 
and state the results in terms of
unspecified universal constants.

The next result 
provides an alternative bound
based on an entirely
different technique:

\begin{theorem}[Bounded function classes and Lipschitz aggregations]
\label{thm:max-fat-RV}
For 
$0<\eps<\log 2$,
$F_1,\ldots,F_k\subseteq
[-R,R]
^\X$,
and an aggregation rule
$G$ that is $L$-Lipschitz ($L\geq 1$) in supremum norm (see definition (\ref{def:lip})),
we have
\beq
\label{eq:max-fat-RV}
\fat_\gamma(G(F_1,\ldots,F_k))
&\le&
CD\Log^{1+\eps}\frac{LRk}{\gamma}
,
\qquad
0<\gamma/L<R,
\eeq
where
\beq
D &=& 
\sum_{i=1}^k\fat_{c\eps \gamma}(F_i)
\eeq
and $C,c>0$ are universal constants.
\end{theorem}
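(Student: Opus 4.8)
The plan is to pass through covering numbers rather than argue combinatorially about shattered sets directly. The two classical facts I would invoke are: (i) a finite-scale version of the fat-shattering$\to$covering-number bound (Mendelson--Vershynin, or Rudelson--Vershynin), which says that for a class $G\subseteq[-R,R]^\X$ and any finite $E\subset\X$, the $L_2$-covering number at scale $\alpha$ satisfies $\log\covn_2(G(E),\alpha)\le C\,\fat_{c\alpha}(G)\,\Log(R/\alpha)$ (possibly with an extra $\Log$ factor, which is exactly what the $\eps$ in the statement is buying us); and (ii) the reverse direction, that if $G$ $\gamma$-shatters a set of size $m$ then $\log\covn_\infty(G(E),\gamma)\ge c\,m$ for an appropriate $E$ of size $m$ — in fact one gets a $\gamma/2$-separated set of size $2^{cm}$ in the $L_\infty$ metric. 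So fat-shattering dimension is sandwiched by covering-number growth, and it suffices to control the covering numbers of $\Fmax$.

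The key structural step is that the max operator is $1$-Lipschitz coordinatewise: if $f_i'$ is an $\alpha$-cover element for $f_i$ in $\ell_\infty$ (equivalently in $L_2(\mu_n)$ after the usual reduction, but $\ell_\infty$ is cleanest here), then $\max_i f_i'$ is within $\alpha$ of $\max_i f_i$ in the same metric, because $|\max_i a_i-\max_i b_i|\le\max_i|a_i-b_i|$. Hence, for any finite $E$,
\[
\covn_\infty(\Fmax(E),\alpha)\;\le\;\prod_{i=1}^k \covn_\infty(F_i(E),\alpha).
\]
Taking logs and feeding in bound (i) for each $F_i$ at scale $\alpha=c\eps\gamma$ (so that the distortion $k\alpha$ incurred by chaining/Lipschitzness stays below $\gamma$, up to constants — this is where the $\Log(Rk/\gamma)$ as opposed to $\Log(R/\gamma)$ enters), I get
\[
\log\covn_\infty\!\big(\Fmax(E),\,c\eps\gamma\big)\;\le\;C\Log^{1+\eps}\tfrac{Rk}{\gamma}\sum_{i=1}^k\fat_{c\eps\gamma}(F_i)\;=\;CD\,\Log^{1+\eps}\tfrac{Rk}{\gamma}.
\]
Now if $\Fmax$ $\gamma$-shatters a set $S$ of size $m=\fat_\gamma(\Fmax)$, fact (ii) forces $\log\covn_\infty(\Fmax(S),\gamma)\ge cm$; combining with the displayed upper bound at the matching scale (after adjusting constants so that $c\eps\gamma$ and $\gamma$ are comparable up to the $\eps$ dependence — note $0<\eps<\log 2$ keeps everything in range) yields $m\le CD\,\Log^{1+\eps}(Rk/\gamma)$, which is the claim.

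The main obstacle is getting the scales and the exponent $1+\eps$ to line up honestly. The naive fat-shattering-to-covering bound of Mendelson--Vershynin carries a $\Log^2$ or a polylog factor; to squeeze it down to $\Log^{1+\eps}$ one must use the sharper entropy bound (e.g.\ the variant that trades the exponent of the $\Log$ against the constant in the scale of the fat-shattering dimension — this is precisely why $D$ is evaluated at $c\eps\gamma$ and not at $c\gamma$, and why $\eps$ appears both in the exponent and inside $D$). I would state that sharpened single-class entropy estimate as a lemma (citing the relevant source), verify the coordinatewise-Lipschitz product bound for $\covn_\infty$, and then the rest is bookkeeping: choosing the constant $c$ in $c\eps\gamma$ small enough that $k$ copies of the cover-scale error do not overwhelm $\gamma$, and checking that the lower-bound side (fact (ii)) can be applied at scale $\gamma$ on the shattered set itself. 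The restriction $0<\gamma<R$ and $0<\eps<\log 2$ are exactly the ranges in which the $\Log$'s are all at least $1$ and the stated entropy lemma is valid, so no degenerate cases survive.
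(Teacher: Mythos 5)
Your proposal follows essentially the same route as the paper's proof: the $L_\infty$ product bound for covering numbers of a $k$-fold maximum (Theorem~\ref{thm:max-cov} with $p=\infty$), the Rudelson--Vershynin $L_\infty$ entropy estimate with fat-shattering dimension evaluated at scale $c\eps\gamma$ (Lemma~\ref{lem:RV}), and a Talagrand-type lower bound on the covering number of the shattered set (Lemma~\ref{lem:talag} via (\ref{eq:covn-pq})), sandwiching $\fat_\gamma(\Fmax)$ exactly as you describe. The one piece of bookkeeping you gloss over is that the Rudelson--Vershynin bound carries factors of $\Log(\ell)$ in the size $\ell$ of the shattered set, which must be removed by the self-bounding step $\ell\le CD\Log^{1+\eps}(R\ell k/D\gamma)\implies\ell\le CD\Log^{1+\eps}(Rk/\gamma)$; relatedly, the $k$ inside the logarithm enters through Jensen's inequality applied to $\sum_i v_i\Log^{1+\eps}(\cdot/v_i)$ and through this step, not through any ``distortion $k\alpha$'' of the max operator, which in $L_\infty$ preserves the covering scale exactly.
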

In \Cref{sec:cov-num}, we show that maximum, median, mean, and max-min aggregations are $1$-Lipschitz.
\paragraph{Remark.}
The bounds in Theorems~\ref{thm:max-fat}
and \ref{thm:max-fat-RV} are, in general, incomparable---and not just because of the unspecified constants in the latter. One notable difference is that
Theorem~\ref{thm:max-fat} only depends on the
shattering scale $\gamma$,
while 
Theorem~\ref{thm:max-fat-RV} additionally
features a (weak) explicit dependence on the aspect
ratio $R/\gamma$. 
In particular,
Theorem~\ref{thm:max-fat} is applicable to semi-bounded affine classes
(see Section~\ref{sec:affine}),
while Theorem~\ref{thm:max-fat-RV} is not.
Still, for fixed $R,\gamma$
and large $k$, 
the latter
presents a significant asymptotic
improvement over the former.

For the special case of affine functions and hinge-loss affine functions, 
the technique of Theorem~\ref{thm:max-fat-RV}
yields a
considerably sharper estimate:
\begin{theorem}[Dimension-free bound for Lipschitz aggregations of affine functions]
\label{thm:max-fat-lin-R}
Let $B\subset\R^d$ be the $d$-dimensional
Euclidean unit ball 
and
\beqn
\label{eq:Fi-max-fat-lin-R}
F_i=\set{x\mapsto w\cdot x+b;
\nrm{w}\vee |b|\le R_i, w\in \R^d, b,R_i\in \R},
\qquad
i\in[k],
\eeqn
be $k$ collections of 
$R_i$-bounded affine functions
on $\X=B$
and
$G$ be 
an
aggregation rule that is $L$-Lipschitz in supremum norm (see definition (\ref{def:lip})).
Then
\beqn
\label{eq:Fi-max-fat-lin-R-bd}
\fat_{\gamma}(G(F_1,\ldots,F_k))
&\le&
\frac{
CL^2\Log(k)
}{
\gamma^2
}
\sum_{i=1}^k 
{R_i^2}
,
\qquad
0<\gamma/L<\min_{i\in[k]}
R_i
,
\eeqn
where $C>0$
is a universal constant.
Further, if
\begin{align}\label{hinge}
    F_i^{\hinge}=\set{(x,y)\mapsto 
\max\set{0,1-yf(x,y)}
; f\in F_i}
\end{align}
is a family
of
$R_i$-bounded hinge-loss affine functions
for $i\in[k]$
and
$G_{\hinge}\equiv G(F_1^{\hinge},\ldots,F_k^{\hinge})$ 
is 
an
aggregation rule that is $L$-Lipschitz in supremum norm, then the same bound as in (\ref{eq:Fi-max-fat-lin-R-bd}) hold for $\fat_{\gamma}(G_{\hinge})$.
\end{theorem}

In particular, Theorem \ref{thm:max-fat-lin-R} improves by a log factor 
the estimate of
\citet{fefferman2016testing},
on
the fat-shattering dimension of max-min aggregation (defined in Section \ref{sec:prelim}) of linear functions:\footnote{
The max-min aggregation is 
shown to be
$1$-Lipschitz in supremum norm 
in
Lemma \ref{lem:max-min} of Section \ref{sec:cov-num}.}
\begin{lemma}[\citet{fefferman2016testing}, Lemma 6]
Let $B\subset\R^d$ be the $d$-dimensional
Euclidean unit ball 
and
\beq
F_{ij}=\set{x\mapsto w\cdot x;
\nrm{w}\leq \nrm{1}, w\in \R^d},
\qquad
i\in[k],j\in[\ell],
\eeq
be $k\ell$ (identical)
linear function classes defined on $\X=B$.
If
$G_{\maxmin}$ is the
max-min aggregation rule \eqref{eq:maxmindef}, then
\beq
\fat_\gamma(
G_{\maxmin}(F_{11},\ldots,F_{k\ell})
)
&\le&
C\frac{k\ell}{\gamma^2}\log^2\left(C\frac{k\ell}{\gamma^2}\right),
\eeq
where $C>0$ is a universal constant.
\end{lemma}
Our
Theorem \ref{thm:max-fat-lin-R} 
improves the latter by a log factor:
\beq
\fat_\gamma(
G_{\maxmin}(F_{11},\ldots,F_{k\ell})
)
&\le&
C\frac{k\ell\log\left(k\ell\right)}{\gamma^2}.
\eeq
\begin{corollary}[Rademacher complexity for $k$-Fold Maximum of Affine Functions]\label{thm:rademacher}
Let $F_i$ be an $R_i$-bounded affine function 
class
as in (\ref{eq:Fi-max-fat-lin-R}) or a hinge loss affine function as in (\ref{hinge}), let
$G_{\max}$ be the maximum aggregation rule, and let $\tilde{R}=\max_i R_i$, then
\begin{align*}
    \mathcal{R}_n(G_{\max}(F_1,\ldots,F_k))
    \leq
C\sqrt{\frac{\Log(k)\log^3(\tilde{R}n)\tilde{R}^2\sum_{i=1}^k 
{R_i^2}}{n}}.
\end{align*}
where $\mathcal{R}_n$ is the Rademacher complexity and $C>0$ is a universal constant.
\end{corollary}
Corollary \ref{thm:rademacher} improves upon \citet[Theorem 7]{DBLP:journals/jmlr/RavivHO18}. Their upper bound scales linearly with $k$, whereas ours 
as $\sqrt{k\log k}$.

Note, however, that
for linear classes
a better
bound 
is known:
\begin{theorem}[\citet{zhivotovskiy2022sharp}]
Let $B\subset\R^d$ be the $d$-dimensional
Euclidean unit ball 
and
\beq
F_i=\set{x\mapsto w\cdot x;
\nrm{w}\leq {1}, w\in \R^d},
\qquad
i\in[k]
\eeq
be $k$ (identical)
linear function classes defined on $\X=B$.
If
$G_{\max}$ is the
maximum aggregation rule, then
\begin{align*}
    \mathcal{R}_n(G_{\max}(F_1,\ldots,F_k))
    \leq
    C \log\left(\frac{n}{k}\right)\sqrt{\frac{k\log k}{n}},
\end{align*}
where $\mathcal{R}_n$ is the Rademacher complexity and $C>0$ is a universal constant.
\end{theorem}

The estimate in \Cref{thm:max-fat-lin-R} is {\em dimension-free} in the sense of being independent of $d$. In applications where
a dependence on $d$ is admissible, 
an
optimal bound can be obtained:
\begin{theorem}[Dimension-dependent bound for $k$-fold maximum of affine functions]
\label{thm:max-fat-lin-d}
Let
$\X=\R^d$
and
$F_i\subset\R^\X$ be $k$ (identical)
function classes consisting of all
real-valued affine functions:
\beq
F_i=\set{x\mapsto w\cdot x+b; w\in \R^d, b\in \R
},
\qquad
i\in[k]
\eeq
and let $G_{\max}$ be their $k$-fold maximum (see definition (\ref{eq:Fmax})).
Then
\beq
\fat_\gamma(
G_{\max}(F_1,\ldots,F_k)
)
&\le&
Cdk\Log k
,
\qquad
\gamma>0,
\eeq
where $C>0$ is a universal constant.
\end{theorem}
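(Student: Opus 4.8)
The plan is to bound $\fat_\gamma(\Fmax)$ by the pseudo-dimension of $\Fmax$ and then to recognize each subgraph of a member of $\Fmax$ as a union of $k$ halfspaces in dimension $d+1$; note that feeding $\fat_\gamma(F_i)=\Theta(d)$ into Theorem~\ref{thm:max-fat} would only yield $O(dk\log^2(dk))$, so a direct argument is needed to shave the extra logarithm. First I would record the scale-free inequality $\fat_\gamma(F)\le\operatorname{Pdim}(F)$, valid for every $F\subseteq\R^\X$ and every $\gamma>0$: if $S=\set{x_1,\dots,x_m}$ is $\gamma$-shattered with shift vector $r\in\R^m$, then for each $y\in\set{-1,1}^m$ some $f\in F$ satisfies $y_i(f(x_i)-r_i)\ge\gamma/2>0$ for all $i$, hence $\sgn(f(x_i)-r_i)=y_i$, so $S$ is pseudo-shattered with witnesses $r$. (That the bound cannot depend on $\gamma$ is also clear a priori, since $\Fmax$ is closed under $f\mapsto\lambda f+c$ for $\lambda>0$.)

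Next I would use the standard identification $\operatorname{Pdim}(\Fmax)=\vc(\mathcal S)$, where $\mathcal S$ is the family of subgraphs $\operatorname{sg}(f)=\set{(x,t)\in\R^d\times\R:\, t<f(x)}\subseteq\R^{d+1}$, $f\in\Fmax$. For $f=\max_{i\in[k]}(w_i\cdot x+b_i)$ one has $\operatorname{sg}(f)=\bigcup_{i=1}^k\set{(x,t):\, t-w_i\cdot x-b_i<0}$, a union of at most $k$ (open) halfspaces of $\R^{d+1}$. Hence $\mathcal S$ is contained in the family $\mathcal H^{\cup k}$ of all unions of $\le k$ halfspaces in $\R^{d+1}$ --- equivalently, the sets accepted by a two-layer linear-threshold circuit with $k$ first-layer gates on $\R^{d+1}$ feeding one OR gate.

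It then remains to bound $\vc(\mathcal H^{\cup k})$. Halfspaces in $\R^{d+1}$ have VC dimension $d+2$, so by Sauer--Shelah a single halfspace induces at most $\Phi(m):=\sum_{j=0}^{d+2}\binom mj\le(em/(d+2))^{d+2}$ dichotomies on $m$ points, and a member of $\mathcal H^{\cup k}$ at most $\Phi(m)^k$ of them. If $\mathcal H^{\cup k}$ shatters $m$ points then $2^m\le\Phi(m)^k$; writing $p=d+2$ and $q=m/p$, this reads $q\ln2\le k\ln(eq)$, which forces $q=O(k\Log k)$, i.e. $m=O(dk\Log k)$. Chaining, $\fat_\gamma(\Fmax)\le\operatorname{Pdim}(\Fmax)\le\vc(\mathcal H^{\cup k})\le cdk\Log k$.

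The delicate point is precisely this last quantitative step: the crude growth-function estimate has to be solved so that the surviving logarithm is $\Log k$, not $\Log(dk)$. This works out because in $2^m\le\Phi(m)^k$ the total exponent ($pk$) and the VC dimension of a single halfspace ($p$) have ratio exactly $k$, so after the substitution $m=pq$ the argument of the logarithm is $\Theta(k)$; alternatively one may quote the Baum--Haussler bound $\vc\le 2W\log_2(eN)$ for threshold networks with $W=k(d+2)$ weights and $N=k+1$ computational nodes, which gives the same conclusion. Everything else --- that the pseudo-dimension dominates fat-shattering, that a $k$-fold maximum of affine functions has a subgraph that is a union of $k$ halfspaces, and the Sauer--Shelah count --- is routine. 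For the optimality claim a separate lower-bound construction is needed: already $k=1$ forces the $\Theta(d)$ term (affine functions $\gamma$-shatter $d+1$ points in general position at every scale), and one expects the $k\Log k$ factor to be tight as well, but that is not part of the argument above.
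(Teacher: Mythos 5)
Your proof is correct and follows essentially the same route as the paper: reduce $\fat_\gamma$ to a VC-dimension question about a Boolean class built from the affine components (you via pseudo-dimension and subgraphs, i.e.\ $k$-fold unions of halfspaces in $\R^{d+1}$; the paper via an $r$-shift followed by $\sign$), and then apply the standard growth-function composition bound for $k$-fold aggregations, which the paper simply cites from \citet[Lemma 3.2.3]{MR1072253} and which you re-derive from Sauer--Shelah, correctly extracting the $\Log k$ rather than $\Log(dk)$ factor. The only cosmetic difference is that your halfspaces live in $\R^{d+1}$ (VC dimension $d+2$) while the paper works with the shifted sign classes in $\R^d$ (VC dimension $d+1$); both give $O(dk\Log k)$.
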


The 
optimality of the
upper bound in
Theorem~\ref{thm:max-fat-lin-d}
is witnessed by the
matching lower bound:
\begin{theorem}[Dimension-dependent lower bound for $k$-fold maximum of affine functions]
\label{thm:logk-lb}
For $k\ge1$ and $d\ge4$,
let
$F_1=F_2=\ldots=F_k$ be the collection of all
affine functions over $
\X=
\R^d$
and let $G_{\max}$ 
be their $k$-fold maximum (see definition (\ref{eq:Fmax})).
Then
\beq
\fat_\gamma(
G_{\max}(F_1,\ldots,F_k)
)
&\ge&
C\log(k)
\sum_{i=1}^k
\fat_\gamma(F_i)
=
Cdk\log k
,
\qquad
\gamma>0,
\eeq
where $C>0$ is a universal constant.
\end{theorem}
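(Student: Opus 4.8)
The plan is to reduce this lower bound to an \emph{ordinary}, set-theoretic shattering statement about $k$-fold unions of halfspaces, and then appeal to the (known) tightness of the Blumer--Ehrenfeucht--Haussler--Warmuth bound $O(vk\log k)$ on the VC dimension of the $k$-fold union of a class of VC dimension $v$. Since the class $F$ of all affine functions on $\R^d$ is closed under $f\mapsto\lambda f$ for $\lambda>0$, a $\gamma$-shattered set is $\gamma'$-shattered for every $\gamma'>0$ (so $\fat_\gamma(F_i)=d+1$ for all $\gamma$, the pseudo-dimension of affine maps), and for the same reason $\Fmax$ is scale-closed. Hence it suffices to exhibit, for some $m=\Omega(dk\log k)$, points $x_1,\dots,x_m\in\R^d$ such that for every sign pattern $y\in\set{-1,1}^m$ there are affine functions $f_1,\dots,f_k$ with $y_i\max_{j\in[k]}f_j(x_i)\ge1$ for all $i\in[m]$: such a configuration witnesses $\faat_1(\Fmax)\ge m$, hence $\faat_\gamma(\Fmax)\ge m$ for every $\gamma>0$ after rescaling the witnessing functions, and finally $\fat_\gamma(\Fmax)\ge\faat_\gamma(\Fmax)\ge m$ by taking the shift $r\equiv0$ in (\ref{eq:fatfat0}).

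Next, observe that $\set{x:\max_{j}f_j(x)\ge t}=\bigcup_{j\in[k]}\set{x:f_j(x)\ge t}$ is a union of $k$ halfspaces, so it is enough to produce $m=\Omega(dk\log k)$ points in $\R^d$ that are shattered, in the usual sense, by $k$-fold unions of halfspaces. Given such a point set $S=\set{x_1,\dots,x_m}$ and a pattern $y$, pick (closed) halfspaces $H_1,\dots,H_k$, written with unit normals as $H_j=\set{x:w_j\cdot x+b_j\ge0}$, whose union meets $S$ in exactly $\set{x_i:y_i=1}$. Every $x_i$ with $y_i=-1$ lies strictly outside all the $H_j$, hence has signed distance at most $-\delta$ to each $\partial H_j$ for some $\delta=\delta(y)>0$; replacing $b_j$ by $b_j+\delta/3$ enlarges each halfspace, leaves every $y_i=-1$ point outside with margin $2\delta/3$, and places every $y_i=1$ point inside some $H_j$ with margin at least $\delta/3$. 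Scaling the resulting affine functions by $3/\delta$ gives $f_1,\dots,f_k$ with $y_i\max_j f_j(x_i)\ge1$, as required, so the problem is now purely combinatorial-geometric.

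It remains to show that $k$-fold unions of halfspaces in $\R^d$ (equivalently, by complementation, intersections of $k$ halfspaces --- convex polytopes with at most $k$ facets) shatter a set of size $\Omega(dk\log k)$. The naive lower bound is only $\Theta(dk)$: dedicate one halfspace to each block of $d+1$ points in general position. The extra $\log k$ factor --- which is exactly what is needed to match the upper bound of Theorem~\ref{thm:max-fat-lin-d} --- forces the $k$ halfspaces to be combined \emph{jointly} rather than blockwise, so that the count of realizable dichotomies behaves like the Sauer-type bound $m^{\Theta(dk)}$ rather than $2^{\Theta(dk)}$. This is precisely the matching lower bound for $k$-fold unions (Eisenstat and Angluin); one adapts their construction to the halfspace class and checks that halfspaces in $\R^d$ are rich enough for it --- this is where the hypothesis $d\ge4$ enters --- and that the shattering it produces is robust in the sense used above (which is automatic for any finite configuration by the perturbation-and-rescale argument of the preceding paragraph). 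I expect this adaptation, rather than the reduction itself, to be the main obstacle.
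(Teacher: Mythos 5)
Your reduction is exactly the one the paper uses: exploit closure of the affine class under positive scalar multiplication to convert VC-shattering by $k$-fold unions of halfspaces into $\gamma$-shattering (with shift $r\equiv0$) of $\Fmax$ at every scale, via a perturb-and-rescale step. That part is correct, and your explicit $\delta/3$ margin bookkeeping is a fine (slightly more detailed) version of the paper's ``scaling argument.'' The identity $\fat_\gamma(F_i)=d+1$ also matches the paper, which gets it from $\vc(\sign(F_i))=d+1$ together with Lemma~\ref{lem:fat-faat}.

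The genuine gap is the combinatorial-geometric core, which you explicitly leave open: the existence of a set of $\Omega(dk\log k)$ points in $\R^d$ shattered by unions of $k$ halfspaces. Pointing to Eisenstat--Angluin does not close this, because their lower bound is for $k$-fold unions of an \emph{abstract} concept class of a given VC dimension --- the hard part is precisely realizing such a construction geometrically with halfspaces, and that realization is not a routine adaptation. This is the content of \citet[Theorem 1]{DBLP:journals/jmlr/CsikosMK19} (``Tight lower bounds on the VC-dimension of geometric set systems''), which is what the paper invokes and which is also where the hypothesis $d\ge4$ comes from. So your proof is the paper's proof minus its one essential external ingredient; as written, the step you yourself flag as ``the main obstacle'' is not carried out, and without citing or reproving the Csik\'os--Mustafa--Kupavskii result the argument only yields the trivial $\Omega(dk)$ bound.
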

The scaling argument employed in the
proof of Theorem~\ref{thm:logk-lb}
can be invoked to show that
the claim continues to hold
for $\X=B$.

Together, Theorems~\ref{thm:max-fat-lin-d}
and \ref{thm:logk-lb} show that
the logarithmic
dependence on $k$
is optimal.

\section{Proofs}

We start with upper-bounding the fat-shattering dimension of aggregation rule that commute with shifts, in terms of the dimensions of the component classes.
\subsection{Proof of Theorem~\ref{thm:max-fat}}
\label{sec:max-fat}

\paragraph{Partial concept classes and disambiguation.}

We say that $F\st\subseteq\setzos^\X$
is a 
{\em partial}
concept class over
$\Omega$;
this usage is consistent with
\citet{DBLP:journals/corr/abs-2107-08444},
while \citet{DBLP:conf/alt/AttiasKM19,attias2022improved}
used the descriptor 
{\em ambiguous}.
For $f\st\in
F\st
$,
define its {\em disambiguation set}
$\disa(f\st)\subseteq\setzo^\X$
as
\beq
\disa(f\st)=
\set{
g\in\setzo^\X:
\forall x\in\X,~f\st(x)\neq\star\implies f\st(x)=g(x)
};
\eeq
in words, $\disa(f\st)$ consists of the
{\em total} concepts $g:\X\to\setzo$
that agree pointwise with $f\st$, whenever the latter takes a value in $\setzo$.
We say that $\bar F\subseteq\setzo^\X$
disambiguates $F\st$
if for all $
f\st\in
F\st$,
we have $\bar F\cap\disa(f\st)\neq\emptyset$;
in words, every 
$f\st\in
F\st$
must have a disambiguated representative in
$\bar F$.\footnote{\citet{attias2022improved}
additionally required that $\bar F
\subseteq
\bigcup_{f\st\in F\st}\disa(f\st)$,
but this is an unnecessary restriction,
and does not affect any of the results.}

As in \citet{DBLP:journals/corr/abs-2107-08444,attias2022improved}, we say\footnote{
\citet{DBLP:conf/alt/AttiasKM19}
had 
incorrectly
given
$F\st(S)=\setzo^S$
as the shattering condition.
} 
that
$S\subset\X$ is VC-shattered by $F\st$
if $F\st(S)\supseteq\setzo^S$. We write $\vc(F\st)$ to denote
the size of the largest VC-shattered set
(possibly, $\infty$).
The obvious relation
$
\vc(F\st)
\le
\vc(\bar F)
$
always holds
between a partial concept class
and any of its disambiguations.
\citet[Theorem 13]{DBLP:journals/corr/abs-2107-08444}
proved the following variant of the
Sauer-Shelah-Perles Lemma for partial
concept classes:
\begin{lemma}[\citet{DBLP:journals/corr/abs-2107-08444}]
\label{lem:disambig}
For every $F\st\subseteq\setzos^\X$
with $d=\vc(F\st)<\infty$ and $|\Omega|< \infty$,
there is an $\bar F$
disambiguating $F\st$
such that
\beqn
\label{eq:disamb}
|\bar F(\X)|
&\le&
(|\X|+1)^{(d+1)\log_2|\X|+2}.
\eeqn
For $d>0$ and $|\X|>1$, this 
implies the somewhat more wieldy
estimate\footnote{
The estimate (\ref{eq:disamb-c})
does not appear in 
\citet{DBLP:journals/corr/abs-2107-08444},
but is an elementary consequence of
(\ref{eq:disamb}).} to
\beqn
\label{eq:disamb-c}
|\bar F(\X)|
&\le&
|\X|^{5d\log_2|\X|}.
\eeqn
\end{lemma}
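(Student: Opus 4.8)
The lemma has two parts, and only the second calls for an argument. The first inequality (\ref{eq:disamb}) is exactly \citet[Theorem 13]{DBLP:journals/corr/abs-2107-08444} --- the partial-concept-class analogue of the Sauer--Shelah--Perles lemma, proved there by a compression/shifting argument --- so the plan is simply to cite it, not to reproduce its proof. What remains is to deduce the ``wieldy'' estimate (\ref{eq:disamb-c}) from (\ref{eq:disamb}) by an elementary manipulation, under the stated hypotheses $|\X|>1$ and $d=\vc(F\st)>0$ (so $d\ge1$, as $\vc$ is a nonnegative integer).

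Write $n=|\X|\ge2$. The idea is to absorb the nuisance ``$+1$'' in the base and the ``$+2$'' in the exponent at the cost of an absolute constant. Since $n+1\le\tfrac32 n$ for $n\ge2$, one has $(n+1)^a\le n^{a(1+\log_n(3/2))}\le n^{1.585\,a}$ for every $a\ge0$, using $\log_n(3/2)\le\log_2(3/2)<0.585$. Applying this with $a=(d+1)\log_2 n+2$ turns (\ref{eq:disamb}) into $|\bar F(\X)|\le n^{1.585((d+1)\log_2 n+2)}$, so it suffices to verify the exponent inequality
\[
1.585\bigl((d+1)\log_2 n+2\bigr)\ \le\ 5d\log_2 n.
\]
Bounding $d+1\le 2d$ and then using $d\log_2 n\ge 2$ (valid once $n\ge4$, since $d\ge1$), the left-hand side is at most $3.17\,d\log_2 n+3.17\le 3.17\,d\log_2 n+1.83\,d\log_2 n=5d\log_2 n$, as needed.

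It remains to dispose of $n\in\{2,3\}$, where the purely algebraic step above is a touch too weak. There I would fall back on the trivial estimate $|\bar F(\X)|\le|\setzo^\X|=2^n$ and observe that $2^n\le 2^{5d(\log_2 n)^2}=n^{5d\log_2 n}$, because $d\ge1$ and $n\le 5(\log_2 n)^2$ for $n=2$ ($2\le5$) and $n=3$ ($3\le 12.5$). Combining the generic range $n\ge4$ with these two leftover cases yields (\ref{eq:disamb-c}) for all $n\ge2$, $d\ge1$. There is no genuine obstacle here: the entire combinatorial content sits in (\ref{eq:disamb}), which is imported wholesale; the only point requiring a moment's care is that (\ref{eq:disamb})$\,\Rightarrow\,$(\ref{eq:disamb-c}) does not go through by logarithm-chasing alone at the smallest values of $|\X|$, which is what forces the case split, and everything else is routine bookkeeping with the crude inequalities $n+1\le\tfrac32 n$ and $d+1\le 2d$.
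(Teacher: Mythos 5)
Your proposal is correct and matches the paper's treatment: the combinatorial content of (\ref{eq:disamb}) is imported wholesale from \citet[Theorem 13]{DBLP:journals/corr/abs-2107-08444}, and (\ref{eq:disamb-c}) is asserted there only as an ``elementary consequence'' in a footnote, with no details given. Your derivation supplies those details correctly --- the bounds $n+1\le\tfrac32 n$, $d+1\le 2d$, and $d\log_2 n\ge 2$ for $n\ge4$, together with the trivial fallback $|\bar F(\X)|\le 2^n\le n^{5d\log_2 n}$ for $n\in\{2,3\}$, all check out.
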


We will make use of the 
elementary fact
\beq
\label{eq:log2x}
x\le A\log_2 x
&\implies&
x\le 3 A\log(3 A)
,
\qquad x,A\ge 1
\eeq
and its corollary
\beqn
\label{eq:log2y}
y\le A(\log_2y)^2
&\implies&
y\le 5 A\log^2(18 A)
,
\qquad y,A\ge 1.
\eeqn

\begin{proof}[of Theorem~\ref{thm:max-fat}]
We follow the basic techniques of discretization and $r$-shifting, employed in
\citet{DBLP:conf/alt/AttiasKM19,attias2022improved}. Fix $\gamma>0$
and define the operator
$\discr{\cdot}:\R\to\set{0,1,\star}$
as
\beq
\discr{t}
&=&
\begin{cases}
0, & t\le -\gamma \\
1, & t\ge\gamma \\
\star, & \text{else}.
\end{cases}
\eeq
Observe that for all
$F\subseteq\R^\X$
and
$\discr{F}:=\set{\discr{f};f\in F}$,
we have
$
\faat_\gamma(F)=\vc(\discr{F}).
$
 Let $G:\R^k\rightarrow \R$ be 
 a $k$-fold
aggregation rule
and 
$F_1,\ldots,F_k\subseteq\R^\Omega$ be real-valued function classes.
Suppose that some
$S=\set{x_1,\ldots,x_\ell}\subset\X$
is $\gamma$-shattered by $G\equiv G(F_1,\ldots,F_k)$.
Proving the claim amounts to 
upper-bounding $\ell$ appropriately.
By (\ref{eq:fatfat0}),
there is an $r\in\R^\X$
such that $\fat_\gamma(G)
=\faat_\gamma(G-r)
=\vc(\discr{G-r})
$.
Put $F'_i:=F_i-r$
and since $G$ commutes with $r$-shift, as defined in (\ref{def:commutes-shift}), we have
\beqn
\label{eq:Fmax'}
G'
:=
G(F'_1,\ldots,F'_k)
=
G(F_1-r\ldots,F_k-r)
=
G(F_1\ldots,F_k)-r
.
\eeqn
Hence, $S$ is VC-shattered
by $\discr{G'}$
and 
\beqn
\label{eq:di}
v_i:=
\vc(\discr{F_i'})
=
\faat_\gamma(F_i')
\le
\fat_\gamma(F_i')
=
\fat_\gamma(F_i)
,
\qquad
i\in[k].
\eeqn

Let us assume for now that
each $v_i>0$;
in this case,
there is no loss of
generality in assuming $\ell>1$.
Let
$\bar F_i$
be a ``good'' disambiguation
of $\discr{F_i'}$
on $S$, as furnished by
Lemma~\ref{lem:disambig}:
\beq
|\bar F_i(S)|
&\le&
\ell^{5
v_i
\log_2\ell}.
\eeq

Observe that $
{\bar G}
:=G(\bar F_1,\ldots,\bar F_k)$ is a valid disambiguation of $\discr{G'}$.
It follows that
\beqn
\label{eq:2^ell}
2^\ell 
\;=\;
|{\bar G}
(S)| 
\;\le\;
\prod_{i=1}^k |\bar F_i(S)| 
\;\le\;
\ell^{
5\log_2\ell
\sum_{i=1}^k
v_i
}
.
\eeqn
Thus,
(\ref{eq:log2y}) 
implies
that 
$
\ell\le 
25
(\sum v_i)\log^2(
90
\sum v_i)
$,
and the latter is an
upper bound on
$\vc(
{\bar G}
)
$
---
and hence, also on
$\vc(
\discr{G'}
)
=\fat_\gamma(G)
$.
The claim now follows from~(\ref{eq:di}).

If any one given $v_i=0$, we claim that (\ref{eq:2^ell}) is unaffected.
This is because
any $C\st\subset\setzos^\X$
with $\vc(C\st)=0$ has a 
singleton
disambiguation
$\bar C=\set{c}$.
Indeed, any 
given
$x\in\X$ can 
receive at most one of
$\setzo$ 
as a label
from the members of
$C$ (otherwise, it would be shattered, forcing 
$\vc(C\st)\ge1$).
If {\em any} $c\st\in C\st$ labels $x$
with $0$, then {\em all} members of $C\st$
are disambiguated to label $x$ with $0$
(and, {\em mutatis mutandis}, $1$).
Any $x$ labeled with $\star$ by {\em every} $c\st\in C\st_i$ can be disambiguated arbitrarily (say, to $0$).
Disambiguating the degenerate
$\discr{F_i'}$
to the singleton
$
\bar F_i(S)
$
has no effect on the product in
(\ref{eq:2^ell}).

The foregoing argument continues to hold
if more than one $v_i=0$. In particular,
in the degenerate case where
$\fat_\gamma(F_1)=\fat_\gamma(F_2)=\ldots=\fat_\gamma(F_k)=0$,
we have $\prod|\bar F_i(S)|=1$,
which forces $\ell=0$.
\end{proof}

\subsection{Proof of Theorem~\ref{thm:max-fat-RV}}

First, we upper bound the covering numbers of Lipschitz aggregations as a function of the covering of the component classes.

\begin{theorem}[Covering number of $L$-Lipschitz aggregations]
\label{thm:max-cov}
Let $t>0$,
$p\in[1,\infty]$,
and $F_1,\ldots,F_k\subset L_p(
\mu)$.
Let $G$ be an aggregation rule that is $L$-Lipschitz.
Then, for
all probability measures $\mu$
on $\X$,
\beq
\covn(G(F_1,\ldots,F_k),
L_p(\mu),
t)
&\le&
\begin{cases}
\prod_{i=1}^k
\covn(F_i,t/Lk^{1/p}),
&p<\infty\\
\prod_{i=1}^k
\covn(F_i,t/L),
&p=\infty.
\end{cases}
\eeq
\end{theorem}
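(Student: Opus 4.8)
The plan is to construct an explicit cover of $\Fmax$ as the ``$k$-fold max'' of covers of the components, and then to bound its mesh using Corollary~\ref{cor:ff'}. If any $\covn(F_i,L_p(\mu),\cdot)$ in the asserted bound is infinite the inequality is vacuous, so I would assume all the relevant covering numbers are finite.

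First I would fix the per-coordinate scale: set $t':=t/k^{1/p}$ when $p<\infty$ and $t':=t$ when $p=\infty$, and for each $i\in[k]$ let $G_i\subseteq F_i$ be a minimal $t'$-cover of $F_i$ under $\nrmp{\cdot}{p}$, so $|G_i|=\covn(F_i,L_p(\mu),t')$. Define
\[
G_{\max}:=\max\paren{G_{i\in[k]}}=\set{\textstyle\max_{i\in[k]}g_i:\ g_i\in G_i}.
\]
Since each $g_i\in F_i$, we have $G_{\max}\subseteq\Fmax$, and clearly $|G_{\max}|\le\prod_{i=1}^k|G_i|=\prod_{i=1}^k\covn(F_i,L_p(\mu),t')$.

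Next I would verify that $G_{\max}$ is a genuine $t$-cover. Take any $f=\max_{i\in[k]}f_i\in\Fmax$ with $f_i\in F_i$. By the covering property, choose $g_i\in G_i$ with $\nrmp{f_i-g_i}{p}\le t'$ for each $i$. Then $g:=\max_{i\in[k]}g_i\in G_{\max}$, and Corollary~\ref{cor:ff'} gives, for $p<\infty$,
\[
\nrmp{f-g}{p}\le\paren{\sum_{i=1}^k\nrmp{f_i-g_i}{p}^p}^{1/p}\le\paren{k\,(t')^p}^{1/p}=k^{1/p}t'=t,
\]
while for $p=\infty$ it gives $\nrmp{f-g}{\infty}\le\max_{i\in[k]}\nrmp{f_i-g_i}{\infty}\le t'=t$. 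Taking the infimum over $g\in G_{\max}$ and the supremum over $f\in\Fmax$ shows $\covn(\Fmax,L_p(\mu),t)\le|G_{\max}|$, which is the claimed bound.

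There is essentially no hard step here: the content is entirely in the pointwise inequalities \eqref{eq:abcdp} and \eqref{eq:abcd-infty} already packaged into Corollary~\ref{cor:ff'}, and the only thing to be careful about is bookkeeping — choosing the per-coordinate radius $t/k^{1/p}$ so that the $\ell_p$-aggregation of $k$ errors of that size is exactly $t$, and checking that the constructed cover legitimately sits inside $\Fmax$ so that it qualifies as a cover under the stated definition. The argument is uniform in the probability measure $\mu$, so the bound holds for all $\mu$ simultaneously, as asserted.
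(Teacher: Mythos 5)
Your proof is correct and follows essentially the same route as the paper's: take a $t/k^{1/p}$-cover (or $t$-cover for $p=\infty$) of each $F_i$, form the $k$-fold maximum of these covers, and invoke Corollary~\ref{cor:ff'} to verify the mesh. Your explicit check that the constructed cover lies inside $\Fmax$ (needed since the paper's definition requires covers to be subsets of the class) is a minor point the paper leaves implicit, but the argument is the same.
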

We proceed to the main proof.
\begin{proof} [of Theorem~\ref{thm:max-fat-RV}].
 Let $G:\R^k\rightarrow \R$ be an aggregation rule that is $L$-Lipschitz ($L\geq 1$) in supremum norm, as defined in (\ref{def:lip}), and let $F_1,\ldots,F_k\subseteq[-R,R]^\Omega$ be real-valued function classes.
Suppose that some
$\X_\ell=\set{x_1,\ldots,x_\ell}\subset\X=B$
is $\gamma$-shattered by $G$,
and
let $F_i(\X_\ell)=\evalat{F_i}{\X_\ell}$.
We upper bound the covering number with the fat-shattering dimension as in Lemma~\ref{lem:RV} (see Section \ref{sec:cov-num-fat}), with $n=\ell$ and $p=
\infty$,
\beq
\log\covn(F_i(\X_\ell),L_\infty(\mu_\ell),\gamma)
&\le&
Cv_i\log(R\ell/v_i\gamma)\log^\eps(\ell/v_i),
\qquad
0<\gamma<R,
\eeq
where
$v_i=\fat_{c\eps \gamma}(F_i)$.
Then Theorem~\ref{thm:max-cov} implies that
\beq
\log\covn(G(\X_\ell),L_\infty(\mu_\ell),
\gamma/2
)
&\le&
\sum_{i=1}^k
\log\covn(F_i(\X_\ell),L_\infty(\mu_\ell),
\gamma/2L)
\\
&\le&
C\sum_{i=1}^k
v_i\log(LR\ell/v_i\gamma)\log^\eps(\ell/v_i)
\\
&\maavar{a}{\le}&
C\sum_{i=1}^k
v_i\log^{1+\eps}(LR\ell/v_i\gamma)
\\
&\maavar{b}{\le}&
CD\log^{1+\eps}\frac{LR\ell k}{D\gamma},
\eeq
where $D:=\sum_{i=1}^k v_i$,
(a) follows since $R/\gamma>1$ and assuming $L\geq 1$,
and
(b) follows by the concavity
of $x\log^{1+\eps}(u/x)$ (see Lemma~\ref{lem:log^eps} in Section \ref{sec:conv}).
We can assume $\ell\ge 2$ without loss of generality.
Combining the monotonicity of the covering number 
(see (\ref{eq:covn-pq})) and a lower bound on the covering number in terms of the fat-shattering dimension
(see Lemma \ref{lem:talag} in Section \ref{sec:cov-num-fat}) yields
\beq
\log\covn(G(\X_\ell),L_\infty(\mu_\ell),\gamma/2)
&\ge&
C\fat_{\gamma}(G)
=C\ell,
\eeq
whence
\beq
\ell
&\le&
CD\log^{1+\eps}\frac{LR\ell k}{D\gamma}.
\eeq

Using the elementary fact
\beq
x\le A\Log^{1+\eps} x
&\implies&
x\le c A\Log^{1+\eps} A
\qquad
x,A\ge1
\eeq
(with 
$x=LR\ell k/D\gamma$
and
$A=cLRk/\gamma$),
we get
\beq
\ell&\le& C D\Log^{1+\eps}\frac{LRk}{\gamma},
\eeq
which implies the claim.
\end{proof}

\subsection{Proof of Theorem~\ref{thm:max-fat-lin-R}}

We use the notation and results
from the Appendix,
and in particular, from
Section~\ref{sec:cov-num-lin}.

\begin{proof}[of Theorem~\ref{thm:max-fat-lin-R}]
A bound 
of this form for the $k$-fold maximum aggregation
was claimed in
\citet{kontorovich2018rademacher},
however the argument there
was flawed,
see Section~\ref{sec:discussion}.

 Let $G:\R^k\rightarrow \R$ be an aggregation rule that is $L$-Lipschitz in supremum norm, as defined in (\ref{def:lip}), and let $F_1,\ldots,F_k$ be bounded affine function classes, as defined in (\ref{eq:Fi-max-fat-lin-R}).
Suppose that some
$\X_\ell=\set{x_1,\ldots,x_\ell}\subset\X=B$
is $\gamma$-shattered by $G$,
let $F_i(\X_\ell)=\evalat{F_i}{\X_\ell}$,
and $\mu_\ell$ be the uniform distribution
on $\X_\ell$.
We upper bound the covering number as in Lemma~\ref{lem:ramon} (with $m=\ell$),
\beq
\log\covn(F_i(\X_\ell),L_\infty(\mu_\ell),\gamma)
&\le&
C\frac{R_i^2}{\gamma^2}\Log\frac{\ell \gamma^2}{R_i^2}
,
\qquad
0<\gamma<R_i.
\eeq
Denote $v_i:=L^2R_i^2/\gamma^2$, 
and consider the 
$L_\infty$
covering number
of
$F_i(\X_\ell)$
at
scale $\gamma/L$:
\beq
\log\covn(F_i(\X_\ell),L_\infty(\mu_\ell),\gamma/L)
&\le&
C v_i \Log\frac{\ell}{v_i}.
\eeq
Then Theorem~\ref{thm:max-cov} implies that 
\beq
\log\covn(G(\X_\ell),L_\infty(\mu_\ell),
\gamma/2)
&\le&
\sum_{i=1}^k
\log\covn(F_i(\X_\ell),L_\infty(\mu_\ell),
\gamma/2L)
\\
&\le&
C\sum_{i=1}^k
v_i \Log\frac{\ell}{v_i}
\\
&\maavar{a}{\le}&
C D \Log\frac{k\ell}{D},
\eeq
where $D:=\sum_{i=1}^k v_i$
and 
(a) follows by the concavity of $x\log(u/x)$
(see Corollary~\ref{cor:xlogn/x} in Section \ref{sec:conv}).
Combining the monotonicity of the covering number 
(see (\ref{eq:covn-pq})) and a lower bound on the covering number in terms of the fat-shattering dimension
(see Lemma \ref{lem:talag} in Section \ref{sec:cov-num-fat}) yields
\beq
\log\covn(G(\X_\ell),L_\infty(\mu_\ell),\gamma/2)
&\ge&
C\fat_{\gamma}(G)
=C\ell
,
\eeq
whence
\beq
\ell
&\le&
C D \Log\frac{k\ell}{D}.
\eeq
Using the elementary fact
\beq
x\le A\Log x
&\implies&
x\le cA\Log A,
\qquad
x,A\ge1
\eeq
(with $x=k\ell/D$
and
$A=ck$)
we get
$\ell\le cD\Log k$,
which implies the claim.

The result 
can easily be generalized to hinge-loss
affine classes. Let $F_i$ be an affine function class as in (\ref{eq:Fi-max-fat-lin-R}),
define $F'_i$ 
as the function class
on $B\times\set{-1,1}$
given by $F'_i=\set{(x,y)\mapsto yf(x); f\in F_i}$,
and 
the {\em hinge-loss affine class}
$F_i^{\hinge}$
as the function class
on $B\times\set{-1,1}$
given by $F_i^{\hinge}=\set{(x,y)\mapsto 
\max\set{0,1-f(x,y)}
; f\in F'_i}$.
One first observes that
the restriction of $F'_i$
to any $\set{(x_1,y_1),\ldots,(x_n,y_n)}$,
as a body in $\R^n$,
is identical to the 
the restriction of $F_i$
to
$\set{x_1,\ldots,x_n}$.
Interpreting $F_i^{\hinge}$ as a $2$-fold maximum
over the singleton class $H=\set{h\equiv0}$
and the bounded affine class $F'_i$ lets us invoke
Theorem~\ref{thm:max-cov}
to argue that $F_i$
and $F_i^{\hinge}$ have the same $L_\infty$ covering numbers.
Hence, the argument 
we deployed here to establish
\eqref{eq:Fi-max-fat-lin-R-bd}
for affine classes
also applies
to
$k$-fold
$L$-Lipschitz aggregations
hinge-loss classes.
\end{proof}

\subsection{Proof of Corollary~\ref{thm:rademacher}}
\begin{proof}[of Corollary~\ref{thm:rademacher}]
\citet[Theorem 7]{DBLP:journals/jmlr/RavivHO18}
upper-bounded the Rademacher complexity
of the maximum aggregation of $k$ hinge loss affine functions by $k/\sqrt n$. 

For $R_i$-bounded affine functions or hinge loss affine functions,
the analysis above, combined with
the calculation in
\citet{kontorovich2018rademacher}
yields a bound of
$
O\paren{\sqrt{\frac{\Log(k)\log^3(n)\sum_{i=1}^k 
{R_i^2}}{n}}}$. For completeness, we provide 
the
full proof.

Let $G_{\max}:\R^k\rightarrow \R$ be the $k$-fold maximum aggregation rule, as defined in (\ref{eq:Fmax}), and let $F_1,\ldots,F_k\subseteq\R^\Omega$ be an $R_i$-bounded affine functions as in (\ref{eq:Fi-max-fat-lin-R}) or a hinge loss affine functions as in (\ref{hinge}). Since this aggregation is $1$-Lipschitz in the supremum norm, 
Theorem \ref{thm:max-fat-lin-R} implies that
\beq
\fat_{\gamma}(G_{\max})
&\le&
\frac{
C\Log(k)
}{
\gamma^2
}
\sum_{i=1}^k 
{R_i^2}
,
\qquad
0<\gamma<\min_{i\in[k]}
R_i
,
\eeq
where $C>0$
is a universal constant. 

\paragraph{From fat-shattering to Rademacher.}
The fat-shattering estimate above
can be used to upper-bound the Rademacher complexity
by converting the former to a covering number bound
and plugging it into Dudley's chaining integral \citep{dudley1967sizes}:
\beqn
\label{eq:dudley-int}
\mathcal{R}_{n}(F) \le
\inf_{\alpha\ge0}\paren{4\alpha+12\int_\alpha^\infty\sqrt{\frac{\log \covn(t,F,\nrm{\cdot}_2)}{n}}dt},
\eeqn
where $\covn(\cdot)$ are the $\ell_2$ covering numbers (see Section~\ref{sec:cov-num}).

It remains to bound the covering numbers.
A simple way of doing so is to invoke
Lemmas 2.6, 3.2, and 3.3 in \citet{alon97scalesensitive}
--- but this incurs superfluous logarithmic factors in $n$.
Instead, we 
use the sharper estimate of
\citet{MR1965359},
stated here in Lemma \ref{lem:MV}.
Putting
$\tilde{R}=\max_i R_i$,
the latter yields
\beq
\mathcal{R}_{n}(G_{\max}) &\le&
\inf_{\alpha\ge0}\paren{4\alpha+12\int_\alpha^1\sqrt{\frac{\log \covn(t,G_{\max},\nrm{\cdot}_2)}{n}}dt}
\\&\le&
\inf_{\alpha\ge0}\paren{4\alpha+12c'\int_\alpha^1\sqrt{\frac{\fat_{ct/\tilde{R}}(G_{\max})\log\frac{2\tilde{R}}t}{n}}dt}
\\&\le&
\inf_{\alpha\ge0}\paren{4\alpha+{12c''}\sqrt{\frac{\Log(k)\sum_{i=1}^k 
{R_i^2}}{n}}
  \int_\alpha^1
  \frac{\tilde{R}}{t}
  \sqrt{
    \log\frac{2\tilde{R}}{t}
  }
dt}.
\eeq
Now
\beq
  \int_\alpha^1
  \frac{\tilde{R}}{t}
  \sqrt{
    \log\frac{2\tilde{R}}{t}
  }
dt
=
\frac{2\tilde{R}}{3}\paren{ \log(2\tilde{R}/\alpha)^{3/2}-(\log2\tilde{R})^{3/2}}
\eeq
and choosing $\alpha=1/\sqrt{n}$ yields
\beq
\mathcal{R}_{n}(G_{\max}) &\le&
\frac4{\sqrt n}
+12c''\sqrt{\frac{\Log(k)\sum_{i=1}^k 
{R_i^2}}{n}}\frac{2\tilde{R}}{3}\paren{
  \log(2\tilde{R}\sqrt n)^{3/2}
  -
(\log 2\tilde{R})^{3/2}
}
\\
&=&
O\paren{\sqrt{\frac{\Log(k)\log^3(\tilde{R}n)\tilde{R}^2\sum_{i=1}^k 
{R_i^2}}{n}}}.
\eeq
\end{proof}

\subsection{Proof of Theorem~\ref{thm:max-fat-lin-d}}

\begin{proof}[of Theorem~\ref{thm:max-fat-lin-d}]
Let $G_{\max}:\R^k\rightarrow \R$ be the $k$-fold maximum aggregation rule, as defined in (\ref{eq:Fmax}), and let $F_1,\ldots,F_k\subseteq\R^\Omega$ be real-valued function classes. 
Note that $G_{\max}$ is an aggregation that commutes with shift, as defined in (\ref{def:commutes-shift}).

By (\ref{eq:fatfat0}),
there is an $r\in\R^\X$
such that $\fat_\gamma(G_{\max})
=\faat_\gamma(G_{\max}-r)
$.
As in (\ref{eq:Fmax'}),
put $F'_i:=F_i-r$
and
$
G'_{\max}:=G_{\max}-r
=G_{\max}(F'_1,\ldots,F'_k)
$.
Define $\bar G_{\max}=\sign(G')$
and
$\bar F_i=\sign(F_i')$.

Since $\sign$
and $\max$ commute, we have
$\bar G_{\max}
=\max(\bar F_{i\in[k]})
$.
We claim that
\beqn
\label{eq:faatmax}
\faat_\gamma(G_{\max}')
&\le&
\vc(\bar G_{\max}).
\eeqn
Indeed, any $S\subset\X$
that is $\gamma$-shattered
with shift $r=0$ by any $G\subset\R^\X$
is also VC-shattered by $\sign(G)$.
(See Section~\ref{sec:max-fat},
and
notice that the converse implication---and the reverse inequality---do not hold.)
It holds that
\beq
d+1
\overset{(a)}{=}
\vc(\bar F_i)
\overset{(b)}{=}
\faat_\gamma(F_i)
\overset{(c)}{=}
\fat_\gamma(F_i)
\overset{(d)}{=}
\fat_\gamma(F_i'),
\eeq
where (a) follows from a standard argument
(e.g., \citet[Example 3.2]{mohri-book2012}), 
(b) 
holds because
any $S\subset\R^d$
that is VC-shattered by
$\sign(F'_i)$
is also
$\gamma$-shattered
by $F'_i$
with shift $r=0$, (c)
follows from Lemma
\ref{lem:fat-faat}, since the class is closed under scalar multiplication, and (d) holds since the shattering remains the same for the shifted class.

Now the argument of
\citet[Lemma 3.2.3]{MR1072253}
applies:
\beqn
\label{eq:blumer}
\vc(\bar G_{\max})
&\le&
2(d+1)k\log(3k)
\eeqn
(this holds for any $k$-fold aggregation function,
not just the maximum).
Combining (\ref{eq:faatmax})
with (\ref{eq:blumer}) proves the claim.
\end{proof}

\subsection{Proof of Theorem~\ref{thm:logk-lb}}

\begin{proof}[of Theorem~\ref{thm:logk-lb}]
It follows from
\citet[Example 3.2]{mohri-book2012}
that $\vc(\sign(F_i))=d+1$.
Since $F_i$ is closed under
scalar multiplication,
a scaling argument
shows that 
any $S\subset\R^d$
that is VC-shattered by
$\sign(F_i)$
is also
$\gamma$-shattered
by $F_i$
with shift $r=0$,
whence
$\faat_\gamma(F_i)
=d+1$
for all $\gamma>0$;
invoking
Lemma~\ref{lem:fat-faat}
extends this to $\fat_\gamma(F_i)$
as well.
Now
\citet[Theorem 1]{DBLP:journals/jmlr/CsikosMK19}
shows that the $k$-fold unions
of 
half-spaces
necessarily shatter some set $S\subset\R^d$
of size at least
$cdk\log k$.
Since union is a special case of 
the max operator,
and the latter commutes with $\sign$,
the scaling argument shows that
this
$S$ is $\gamma$-shattered
by
$G_{\max}$
with shift $r=0$.
Hence,
$\fat_\gamma(G_{\max})
\ge
\faat_\gamma(G_{\max})
\ge
|S|
$,
which proves the claim.
\end{proof}

\section{Discussion}\label{sec:discussion}
In this paper, we proved upper and lower bounds on the fat-shattering dimension of aggregation rules as a function of the fat-shattering dimension of the component classes. We leave some remaining gaps for future work.
First, for aggregation rules that commute with shifts, assuming
    $\fat_\gamma(F_i)\leq d$, for $1\leq i \leq k$, we show in \Cref{thm:max-fat} that
    \begin{align*}
    \fat_\gamma(G(F_1,\ldots,F_k))
    \leq Cdk\log^2\left(dk\right),
    \qquad
    \gamma>0,
    \end{align*} 
$C>0$ is a universal constant.
We pose the following 
\paragraph{Open problem.}
\label{conj:holy-grail}
Let
$G$ be an aggregation rule that commutes with shifts.
Is it the case that
for all $F_i\subseteq\R^\X$
with
$\fat_\gamma(F_i)\leq d$, $i\in[k]$,
we have
\beq
 \fat_\gamma(G(F_1,\ldots,F_k))
&\le&
Cdk\log\left(k\right)
,
\qquad
\gamma>0,
\eeq
for some universal $C>0$?

In light of Theorem~\ref{thm:logk-lb},
this is the best one could
hope for in general.
We pose also the following conjecture about bounded affine functions.
\begin{conjecture}
\label{conj:fat-lin-R-tight}
Theorem~\ref{thm:max-fat-lin-R} is tight up to constants. 
For 
$R_i$-bounded affine functions
and
aggregation rule $G$ that is 1-Lipschitz in supremum norm,
\beqn
\fat_{\gamma}(G(F_1,\ldots,F_k))
&\ge&
\frac{
C\Log(k)
}{
\gamma^2
}
\sum_{i=1}^k 
{R_i^2}
,
\qquad
0<\gamma<\min_{i\in[k]}
R_i
,
\eeqn
where $C>0$
is a universal constant.

\end{conjecture}

Throughout the paper,
we mentioned several mistaken claims in the literature.
In this section, we briefly discuss the nature
of these mistakes---which are, in a sense,
variations on the same kind of error.
We begin with
\citet[Lemma 14]{DBLP:conf/alt/AttiasKM19},
which incorrectly claimed that
any partial function class $F\st$
has a disambiguation $\bar F$
such that $\vc(\bar F)\le\vc(F\st)$
(see Section~\ref{sec:max-fat}
for the definitions).
The mistake was pointed out to us by
Yann Guermeur,
and later,
\citet[Theorem 11]{DBLP:journals/corr/abs-2107-08444}
showed
that there exist partial classes
$F\st$ with $\vc(F\st)=1$
for which every disambiguation
$\bar F$ has $\vc(\bar F)=\infty$.

\citet{kontorovich2018rademacher}
attempted to prove the bound
stated in our Theorem~\ref{thm:max-fat-lin-R}
(up to constants, and only for linear classes).
The argument proceeded via a reduction to
the Boolean case, 
as in our proof of Theorem~\ref{thm:max-fat-lin-d}.
It was correctly observed that
if, say, 
some finite $S\subset\X$
is $1$-shattered
by
$F_i$ with shift $r=0$,
then it is also VC-shattered 
by $\sign(F_i)$. Neglected was the fact
that $\sign(F_i)$ might shatter additional points in $\X\setminus S$---and,
in sufficiently high dimension, it necessarily will.
The crux of the matter is that
(\ref{eq:faatmax}) holds
in the dimension-dependent but
not the dimension-free setting;
again, this may be seen as a variant
of the disambiguation mistake.

Finally, the proof of
\citet[Lemma 6]{DBLP:journals/tcs/HannekeK19}
claims,
in the first display, 
that the shattered set can be classified
with large margin, which is incorrect
--- yet another variant of mistaken
disambiguation.

\newpage

\acks{We thank 
Steve Hanneke
and
Ramon van Handel for very helpful
discussions;
the latter, in particular,
patiently explained to us how to prove
Lemma~\ref{lem:ramon}.
Roman Vershynin kindly gave us permission
to share his example in Remark~\ref{rem:vers}.
This research was partially supported by
the Israel Science Foundation
(grant No. 1602/19), an Amazon Research Award,
the Ben-Gurion University Data Science Research Center,
and
Cyber Security Research Center, Prime Minister's Office.
}


\begin{thebibliography}{41}
\providecommand{\natexlab}[1]{#1}
\providecommand{\url}[1]{\texttt{#1}}
\expandafter\ifx\csname urlstyle\endcsname\relax
  \providecommand{\doi}[1]{doi: #1}\else
  \providecommand{\doi}{doi: \begingroup \urlstyle{rm}\Url}\fi

\bibitem[Alon et~al.(1997)Alon, Ben-David, Cesa-Bianchi, and
  Haussler]{alon97scalesensitive}
Noga Alon, Shai Ben-David, Nicol{\`o} Cesa-Bianchi, and David Haussler.
\newblock Scale-sensitive dimensions, uniform convergence, and learnability.
\newblock \emph{Journal of the ACM}, 44\penalty0 (4):\penalty0 615--631, 1997.

\bibitem[Alon et~al.(2020)Alon, Beimel, Moran, and Stemmer]{alon2020closure}
Noga Alon, Amos Beimel, Shay Moran, and Uri Stemmer.
\newblock Closure properties for private classification and online prediction.
\newblock In \emph{Conference on Learning Theory}, pages 119--152. PMLR, 2020.

\bibitem[Alon et~al.(2022)Alon, Hanneke, Holzman, and
  Moran]{DBLP:journals/corr/abs-2107-08444}
Noga Alon, Steve Hanneke, Ron Holzman, and Shay Moran.
\newblock A theory of {PAC} learnability of partial concept classes.
\newblock In \emph{2021 IEEE 62nd Annual Symposium on Foundations of Computer
  Science (FOCS)}, pages 658--671, 2022.

\bibitem[Appert and Catoni(2021)]{appert2021new}
Gautier Appert and Olivier Catoni.
\newblock New bounds for $ k $-means and information $ k $-means.
\newblock \emph{arXiv preprint arXiv:2101.05728}, 2021.

\bibitem[Artstein et~al.(2004)Artstein, Milman, and
  Szarek]{Artstein-duality-entropy}
S.~Artstein, V.~Milman, and S.~J. Szarek.
\newblock Duality of metric entropy.
\newblock \emph{Annals of Mathematics}, 159\penalty0 (3):\penalty0 1313--1328,
  2004.

\bibitem[Attias and Hanneke(2023)]{attias2023adversarially}
Idan Attias and Steve Hanneke.
\newblock Adversarially robust {PAC} learnability of real-valued functions.
\newblock In \emph{Proceedings of the 40th International Conference on Machine
  Learning}, volume 202, pages 1172--1199, 2023.

\bibitem[Attias et~al.(2019)Attias, Kontorovich, and
  Mansour]{DBLP:conf/alt/AttiasKM19}
Idan Attias, Aryeh Kontorovich, and Yishay Mansour.
\newblock Improved generalization bounds for robust learning.
\newblock In \emph{Algorithmic Learning Theory, {ALT} 2019}, volume~98 of
  \emph{Proceedings of Machine Learning Research}, pages 162--183. {PMLR},
  2019.

\bibitem[Attias et~al.(2022)Attias, Kontorovich, and
  Mansour]{attias2022improved}
Idan Attias, Aryeh Kontorovich, and Yishay Mansour.
\newblock Improved generalization bounds for adversarially robust learning.
\newblock \emph{The Journal of Machine Learning Research}, 23\penalty0
  (1):\penalty0 7897--7927, 2022.

\bibitem[Bartlett and Shawe-Taylor(1999)]{299098}
Peter Bartlett and John Shawe-Taylor.
\newblock \emph{Generalization performance of support vector machines and other
  pattern classifiers}, pages 43--54.
\newblock MIT Press, Cambridge, MA, USA, 1999.
\newblock ISBN 0-262-19416-3.

\bibitem[Bartlett and Long(1998)]{DBLP:journals/jcss/BartlettL98}
Peter~L. Bartlett and Philip~M. Long.
\newblock Prediction, learning, uniform convergence, and scale-sensitive
  dimensions.
\newblock \emph{J. Comput. Syst. Sci.}, 56\penalty0 (2):\penalty0 174--190,
  1998.

\bibitem[Baum and Haussler(1989)]{78414}
Eric~B. Baum and David Haussler.
\newblock What size net gives valid generalization?
\newblock \emph{Neural Comput.}, 1\penalty0 (1):\penalty0 151--160, 1989.

\bibitem[Biau et~al.(2008)Biau, Devroye, and Lugosi]{biau2008performance}
G{\'e}rard Biau, Luc Devroye, and G{\'a}bor Lugosi.
\newblock On the performance of clustering in hilbert spaces.
\newblock \emph{IEEE Transactions on Information Theory}, 54\penalty0
  (2):\penalty0 781--790, 2008.

\bibitem[Blumer et~al.(1989)Blumer, Ehrenfeucht, Haussler, and
  Warmuth]{MR1072253}
Anselm Blumer, Andrzej Ehrenfeucht, David Haussler, and Manfred~K. Warmuth.
\newblock Learnability and the {V}apnik-{C}hervonenkis dimension.
\newblock \emph{J. Assoc. Comput. Mach.}, 36\penalty0 (4):\penalty0 929--965,
  1989.

\bibitem[Breiman(1996)]{breiman1996bagging}
Leo Breiman.
\newblock Bagging predictors.
\newblock \emph{Machine learning}, 24:\penalty0 123--140, 1996.

\bibitem[Breiman(2001)]{breiman2001random}
Leo Breiman.
\newblock Random forests.
\newblock \emph{Machine learning}, 45:\penalty0 5--32, 2001.

\bibitem[Csik{\'{o}}s et~al.(2019)Csik{\'{o}}s, Mustafa, and
  Kupavskii]{DBLP:journals/jmlr/CsikosMK19}
M{\'{o}}nika Csik{\'{o}}s, Nabil~H. Mustafa, and Andrey Kupavskii.
\newblock Tight lower bounds on the vc-dimension of geometric set systems.
\newblock \emph{J. Mach. Learn. Res.}, 20:\penalty0 81:1--81:8, 2019.

\bibitem[Duan(2012)]{duan11}
Hubert~Haoyang Duan.
\newblock \emph{Bounding the Fat Shattering Dimension of a Composition Function
  Class Built Using a Continuous Logic Connective}.
\newblock PhD thesis, University of Waterloo, 2012.

\bibitem[Dudley(1967)]{dudley1967sizes}
Richard~M Dudley.
\newblock The sizes of compact subsets of hilbert space and continuity of
  gaussian processes.
\newblock \emph{Journal of Functional Analysis}, 1\penalty0 (3):\penalty0
  290--330, 1967.

\bibitem[Eisenstat(2009)]{DBLP:journals/ipl/Eisenstat09}
David Eisenstat.
\newblock k-fold unions of low-dimensional concept classes.
\newblock \emph{Inf. Process. Lett.}, 109\penalty0 (23-24):\penalty0
  1232--1234, 2009.

\bibitem[Eisenstat and Angluin(2007)]{DBLP:journals/ipl/EisenstatA07}
David Eisenstat and Dana Angluin.
\newblock The {VC} dimension of k-fold union.
\newblock \emph{Inf. Process. Lett.}, 101\penalty0 (5):\penalty0 181--184,
  2007.

\bibitem[Fefferman et~al.(2016)Fefferman, Mitter, and
  Narayanan]{fefferman2016testing}
Charles Fefferman, Sanjoy Mitter, and Hariharan Narayanan.
\newblock Testing the manifold hypothesis.
\newblock \emph{Journal of the American Mathematical Society}, 29\penalty0
  (4):\penalty0 983--1049, 2016.

\bibitem[Foster and Rakhlin(2019)]{DBLP:journals/corr/abs-1911-06468}
Dylan~J. Foster and Alexander Rakhlin.
\newblock $\ell_\infty$ vector contraction for rademacher complexity.
\newblock \emph{CoRR}, abs/1911.06468, 2019.

\bibitem[Freund and Schapire(1997)]{freund1997decision}
Yoav Freund and Robert~E Schapire.
\newblock A decision-theoretic generalization of on-line learning and an
  application to boosting.
\newblock \emph{Journal of computer and system sciences}, 55\penalty0
  (1):\penalty0 119--139, 1997.

\bibitem[Ghazi et~al.(2021)Ghazi, Golowich, Kumar, and
  Manurangsi]{ghazi2021near}
Badih Ghazi, Noah Golowich, Ravi Kumar, and Pasin Manurangsi.
\newblock Near-tight closure bounds for the littlestone and threshold
  dimensions.
\newblock In \emph{Algorithmic Learning Theory}, pages 686--696. PMLR, 2021.

\bibitem[Gottlieb et~al.(2014)Gottlieb, Kontorovich, and
  Krauthgamer]{DBLP:journals/tit/GottliebKK14+colt}
Lee{-}Ad Gottlieb, Aryeh Kontorovich, and Robert Krauthgamer.
\newblock Efficient classification for metric data (extended abstract: {COLT}
  2010).
\newblock \emph{{IEEE} Transactions on Information Theory}, 60\penalty0
  (9):\penalty0 5750--5759, 2014.

\bibitem[Gottlieb et~al.(2018)Gottlieb, Kaufman, Kontorovich, and
  Nivasch]{GKKN-nips18}
Lee{-}Ad Gottlieb, Eran Kaufman, Aryeh Kontorovich, and Gabriel Nivasch.
\newblock Learning convex polytopes with margin.
\newblock In \emph{Neural Information Processing Systems (NIPS)}, 2018.

\bibitem[Hanneke and Kontorovich(2019)]{DBLP:journals/tcs/HannekeK19}
Steve Hanneke and Aryeh Kontorovich.
\newblock Optimality of {SVM:} novel proofs and tighter bounds.
\newblock \emph{Theor. Comput. Sci.}, 796:\penalty0 99--113, 2019.

\bibitem[Kearns and Schapire(1994)]{183126}
Michael~J. Kearns and Robert~E. Schapire.
\newblock Efficient distribution-free learning of probabilistic concepts.
\newblock \emph{J. Comput. Syst. Sci.}, 48\penalty0 (3):\penalty0 464--497,
  1994.

\bibitem[K{\'e}gl(2003)]{kegl2003robust}
Bal{\'a}zs K{\'e}gl.
\newblock Robust regression by boosting the median.
\newblock In \emph{Learning Theory and Kernel Machines: 16th Annual Conference
  on Learning Theory and 7th Kernel Workshop, COLT/Kernel 2003, Washington, DC,
  USA, August 24-27, 2003. Proceedings}, pages 258--272. Springer, 2003.

\bibitem[Klochkov et~al.(2021)Klochkov, Kroshnin, and
  Zhivotovskiy]{klochkov2021robust}
Yegor Klochkov, Alexey Kroshnin, and Nikita Zhivotovskiy.
\newblock Robust k-means clustering for distributions with two moments.
\newblock \emph{The Annals of Statistics}, 49\penalty0 (4):\penalty0
  2206--2230, 2021.

\bibitem[Kontorovich(2018)]{kontorovich2018rademacher}
Aryeh Kontorovich.
\newblock Rademacher complexity of $k$-fold maxima of hyperplanes.
\newblock 2018.

\bibitem[Mendelson and Vershynin(2003)]{MR1965359}
S.~Mendelson and R.~Vershynin.
\newblock Entropy and the combinatorial dimension.
\newblock \emph{Invent. Math.}, 152\penalty0 (1):\penalty0 37--55, 2003.

\bibitem[Mohri et~al.(2012)Mohri, Rostamizadeh, and Talwalkar]{mohri-book2012}
Mehryar Mohri, Afshin Rostamizadeh, and Ameet Talwalkar.
\newblock \emph{Foundations Of Machine Learning}.
\newblock The MIT Press, 2012.

\bibitem[Raviv et~al.(2018)Raviv, Hazan, and
  Osadchy]{DBLP:journals/jmlr/RavivHO18}
Dolev Raviv, Tamir Hazan, and Margarita Osadchy.
\newblock Hinge-minimax learner for the ensemble of hyperplanes.
\newblock \emph{J. Mach. Learn. Res.}, 19:\penalty0 62:1--62:30, 2018.

\bibitem[Rudelson and Vershynin(2006)]{Rudelson-Vershynin06}
M.~Rudelson and R.~Vershynin.
\newblock Combinatorics of random processes and sections of convex bodies.
\newblock \emph{Annals of Mathematics}, 164\penalty0 (2):\penalty0 603--648,
  2006.

\bibitem[Srebro et~al.(2010)Srebro, Sridharan, and
  Tewari]{srebro2010smoothness}
Nathan Srebro, Karthik Sridharan, and Ambuj Tewari.
\newblock Smoothness, low noise and fast rates.
\newblock \emph{Advances in neural information processing systems}, 23, 2010.

\bibitem[Talagrand(2003)]{talagrand2003vapnik}
Michel Talagrand.
\newblock Vapnik--chervonenkis type conditions and uniform donsker classes of
  functions.
\newblock \emph{The Annals of Probability}, 31\penalty0 (3):\penalty0
  1565--1582, 2003.

\bibitem[Vershynin(2018)]{MR3837109}
Roman Vershynin.
\newblock \emph{High-dimensional probability}, volume~47 of \emph{Cambridge
  Series in Statistical and Probabilistic Mathematics}.
\newblock Cambridge University Press, Cambridge, 2018.

\bibitem[Vershynin(2021)]{vers-private}
Roman Vershynin, 2021.
\newblock Private communication.

\bibitem[Zhang(2002)]{zhang2002covering}
Tong Zhang.
\newblock Covering number bounds of certain regularized linear function
  classes.
\newblock \emph{The Journal of Machine Learning Research}, 2:\penalty0
  527--550, 2002.

\bibitem[Zhivotovskiy(2022)]{zhivotovskiy2022sharp}
Nikita Zhivotovskiy.
\newblock A bound for $k$-fold maximum.
\newblock 2022.

\end{thebibliography}

\newpage

\appendix

\section{Auxiliary results}

\subsection{Covering numbers and Lipschitz Aggregations}
\label{sec:cov-num}

\begin{lemma}\label{lem:lp-lip}
If $G:\R^k \rightarrow \R$ is $L$-Lipschitz under $\nrm{\cdot}_p$, then $G: (\R^{\Omega})^k\rightarrow \R^{\Omega}$ is $L$-Lipschitz in $\nrmpk{\cdot}{p}$.
\end{lemma}
\begin{proof}
    \begin{align*}
        \nrmp{G(f_1,\ldots,f_k)-G(f'_1,\ldots,f'_k)}{p}^p
        &=
        \int_\X |G(f_1,\ldots,f_k)(x)-G(f'_1,\ldots,f'_k)(x)|^p\d\mu(x)
        \\
        &=
        \int_\X |G(f_1(x),\ldots,f_k(x))-G(f'_1(x),\ldots,f'_k(x))|^p\d\mu(x)
        \\
        &\leq
        \int_\X L^p\nrm{(f_1(x),\ldots,f_k(x))-(f'_1(x),\ldots,f'_k(x))}_p^p \d\mu(x),
    \end{align*}
where the inequality follows from the assumption that $G:\R^k\rightarrow \R$ is $L$-Lipschitz in $\nrm{\cdot}_p$. This proves
\begin{align*}
        \nrmp{G(f_1,\ldots,f_k)-G(f'_1,\ldots,f'_k)}{p}
        \leq
        L \nrmpk{(f_1,\ldots,f_k)-(f'_1,\ldots,f'_k)}{p},
    \end{align*}
and hence the claim.
\end{proof}
\begin{proof}[of \Cref{thm:max-cov}]
Suppose $p<\infty$, and let $g= G(f_1,\ldots,f_k) \in G(F_1,\ldots,F_k)$.
For each $i\in[k]$,
let $\hat F_i\subset F_i$
be a 
$t/Lk^{1/p}$-cover of $F_i$.
Let each $f_i$ be ``$t/Lk^{1/p}$-covered''
by
some $\hat f_i\in\hat F_i$,
in the sense that
$\nrmp{f_i-\hat f_i}p\le t/Lk^{1/p}$.
Assuming that $G:\R^k\rightarrow \R$ is $L$-Lipschitz in $\nrm{\cdot}_p$, Lemma \ref{lem:lp-lip} implies that $G: (\R^{\Omega})^k\rightarrow \R^{\Omega}$ is $L$-Lipschitz in $\nrmpk{\cdot}{p}$.
Then it follows that $g$ is $t$-covered by $G(\hat f_1,\ldots,\hat f_k)$, since 

\begin{align*}
        \nrmp{G(f_1,\ldots,f_k)-G(f'_1,\ldots,f'_k)}{p}^p
        &\leq
        L^p \nrmpk{(f_1,\ldots,f_k)-(f'_1,\ldots,f'_k)}{p}^p
        \\
        &=
        L^p \int_\X \nrm{(f_1(x),\ldots,f_k(x))-(f'_1(x),\ldots,f'_k(x))}_p^p \d\mu(x)
        \\
        &=
        L^p \int_\X \sum^k_{i=1}\abs{f_i(x)-f'_i(x)}^p \d\mu(x)
        \\
        &=
        L^p \sum^k_{i=1} \int_\X \abs{f_i(x)-f'_i(x)}^p \d\mu(x)
        \\
        &=
        L^p \sum^k_{i=1} \nrmp{f_i-f'_i}{p}^p
        \\
        &\leq
        L^p k\left(\frac{t}{Lk^{1/p}}\right)^p
        \\
        &=
        t^p,
    \end{align*}
and so $\nrmp{G(f_1,\ldots,f_k)-G(f'_1,\ldots,f'_k)}{p}\leq t$.

We conclude
that
$G(F_1,\ldots,F_k)$ has a $t$-cover of size
$|\hat F_1\times\hat F_2\times\ldots\times\hat F_k|$,
which proves the claim.
The case $p=\infty$ is proved analogously (or, alternatively, as a limiting case of $p<\infty$).
\end{proof}
We show that natural aggregations are Lipschitz in $\nrm{\cdot}_p$ norms, $p\in [1,\infty)$, and in supremum norm. 
The following facts are elementary:
\beqn
\label{eq:abcd-max}
|a\vee b - c\vee d|
&\le&
|a-c|
\vee
|b-d|,
\qquad
a,b,c,d\in\R; \\
\label{eq:abcd-min}
|a\wedge b - c\wedge d|
&\le&
|a-c|
\vee
|b-d|,
\qquad
a,b,c,d\in\R,
\eeqn
where $s\vee t:=\max\set{s,t}$
and $s\wedge t:=\min\set{s,t}$.

\begin{lemma}[Maximum aggregation is 1-Lipschitz]
\label{lem:ff'}
Let $G_{\max}:\R^k\rightarrow \R$ be the maximum aggregation, then for any $x,x'\in \R^k$ and 
$p\in[1,\infty]$,
\beq
\abs{G(x)-G(x')}
\leq 
\nrm{x-x'}_p.
\eeq
\end{lemma}
\begin{proof}
For $k=2$ and $p=\infty$, the claim follows from 
the stronger,
pointwise inequality
\eqref{eq:abcd-max}.
The proof follows by simple induction on $k$. Since $\nrm{\cdot}_\infty \leq \nrm{\cdot}_p$, we conclude the proof for $p\in [1,\infty]$.
\end{proof}

\begin{lemma}[Max-Min aggregation is 1-Lipschitz]
\label{lem:max-min}
If $G_{\maxmin}:\R^k\rightarrow \R$ 
is the max-min aggregation, 
then for any $x,x'\in \R^k$ and 
$p\in[1,\infty]$,
\beq
\abs{G(x)-G(x')}
\leq 
\nrm{x-x'}_p.
\eeq
\end{lemma}
\begin{proof}
The inequalities
\eqref{eq:abcd-max},
\eqref{eq:abcd-min}
imply
that
the $k$-fold max and min
aggregations are both $1$-Lipschitz
with respect to $\nrm{\cdot}_\infty$.
Hence, for all $x,y\in\R^{k\times\ell}$, we have
\beq
\abs{
\min_{j\in[\ell]}x_{ij}
-
\min_{j\in[\ell]}y_{ij}
}
\le
\max_{j\in[\ell]}\abs{x_{ij}-x_{ij}},
\qquad i\in[k]
\eeq
and further,
\beq
\abs{
\max_{i\in[k]}\min_{j\in[\ell]}x_{ij}
-
\max_{i\in[k]}\min_{j\in[\ell]}y_{ij}
}
\le
\max_{i\in[k]}\max_{j\in[\ell]}\abs{x_{ij}-x_{ij}}.
\eeq
This proves that
$
\abs{G(x)-G(x')}
\leq 
\nrm{x-x'}_\infty
$.
Since $\nrm{\cdot}_\infty \leq \nrm{\cdot}_p$, 
the claim holds 
for all $p\in [1,\infty]$.
\end{proof}

\subsection{Covering numbers and the fat-shattering dimension}
\label{sec:cov-num-fat}

In this section, we summarize some known results connecting the covering numbers of a bounded function class to its fat-shattering dimension.

\begin{lemma}[\citet{talagrand2003vapnik}, Proposition 1.4]
\label{lem:talag}
For any $F\subseteq[-R,R]^\X$,
there exists a probability measure
$\mu$ on $\X$
such that
\beqn
\label{eq:talag}
\covn(F,L_2(\mu),t)
&\ge&
2^{C\fat_{2t}(F)},
\qquad
0<t<R,
\eeqn
where $C>0$
is a
universal constants.
Moreover,
$\mu$ may be taken to be the uniform
distribution on
any $2t$-shattered subset of $\X$.
\end{lemma}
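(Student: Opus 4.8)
The plan is to take $\mu$ to be the uniform measure on a $2t$-shattered set $S=\set{x_1,\ldots,x_m}\subset\X$ of maximal size $m=\fat_{2t}(F)$ (if $\fat_{2t}(F)=\infty$, run the same argument for $S$ of arbitrarily large finite size to conclude $\covn(F,L_2(\mu),t)=\infty$), and to lower-bound the $t$-covering number of the restriction $F|_S$ in the $\nrmu{\cdot}$ metric --- which on $S$ is the normalized Euclidean norm on $\R^m$ --- by a pigeonhole count over the $2^m$ sign patterns. Since $\mu$ is supported on $S$, any $t$-cover of $F$ restricts to a $t$-cover of $F|_S$, so it suffices to bound the latter from below.

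First I would extract the witnessing functions. Unwinding (\ref{eq:gamma-shatter}) at scale $2t$, fix a shift vector $r\in\R^m$ for which every $y\in\set{-1,1}^m$ admits some $f_y\in F$ with $y_i(f_y(x_i)-r_i)\ge 2t$ for all $i\in[m]$; if the outer supremum over shifts is not attained, one replaces $2t$ by $2t-\eps$ throughout and absorbs the loss into the universal constant at the end. For $y\ne y'$ and any coordinate $i$ with $y_i\ne y'_i$, the values $f_y(x_i)$ and $f_{y'}(x_i)$ lie on opposite sides of $r_i$ at distance at least $2t$ from it, so $\abs{f_y(x_i)-f_{y'}(x_i)}\ge 4t$; discarding the agreeing coordinates,
\[
\nrmu{f_y-f_{y'}}^2
=\oo m\sum_{i=1}^m\abs{f_y(x_i)-f_{y'}(x_i)}^2
\;\ge\;\frac{16t^2}{m}\,d_H(y,y'),
\]
where $d_H$ denotes Hamming distance; in particular the $2^m$ restrictions $f_y|_S$ are pairwise distinct.

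Now the counting step. Let $G\subseteq F$ be any $t$-cover and, for $g\in G$, let $Y_g=\set{y:\nrmu{f_y-g}\le t}$. If $y,y'\in Y_g$ then $\nrmu{f_y-f_{y'}}\le 2t$ by the triangle inequality, so the display forces $d_H(y,y')\le m/4$; hence $Y_g$ sits inside a single Hamming ball of radius $m/4$ and satisfies $\abs{Y_g}\le\sum_{j\le m/4}\binom mj\le 2^{h_2(1/4)m}$, with $h_2$ the base-$2$ binary entropy. Since every $f_y|_S$ is $t$-covered, $\set{-1,1}^m=\bigcup_{g\in G}Y_g$, and therefore
\[
2^m\;\le\;\abs{G}\cdot 2^{h_2(1/4)m},
\qquad\text{i.e.}\qquad
\abs{G}\;\ge\;2^{(1-h_2(1/4))m}.
\]
Taking the infimum over $t$-covers yields (\ref{eq:talag}) with $C=1-h_2(1/4)=\tfrac34\log_2 3-1>0$ (not optimized).

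There is essentially no hard step here: the only genuine ingredient is the Hamming-ball volume bound $\sum_{j\le m/4}\binom mj\le 2^{h_2(1/4)m}$, and everything else is unwinding definitions. The two places that need a moment's care are (i) the quantifier order in (\ref{eq:gamma-shatter}) --- the shift $r$ is chosen before $y$ and so simultaneously witnesses all sign patterns, which is exactly what the coordinatewise separation bound requires --- handled by the $\eps$-slack remark when the supremum is not attained; and (ii) the constraint $G\subseteq F$ in the definition of $\covn$, which is harmless since only the triangle inequality in $L_2(\mu)$ is used. Neither the hypothesis $0<t<R$ nor the boundedness of $F$ is needed for this lower bound (for $t\ge R/2$ one has $\fat_{2t}(F)=0$ and the claim is the trivial $\covn\ge1$).
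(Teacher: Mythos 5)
The paper does not prove this lemma; it is imported verbatim from Talagrand, so there is no in-paper argument to compare against. Your self-contained proof is the standard one and it checks out: uniform measure on a maximal $2t$-shattered set, the coordinatewise separation $\abs{f_y(x_i)-f_{y'}(x_i)}\ge 4t$ on disagreeing coordinates giving $\nrm{f_y-f_{y'}}_{L_2(\mu)}^2\ge \tfrac{16t^2}{m}d_H(y,y')$, and then the pigeonhole over sign patterns with the Hamming-ball volume bound. The triangle-inequality step correctly forces $d_H(y,y')\le m/4$ within each cover element, and your observation that the argument never uses $G\subseteq F$ (so it lower-bounds even external covering numbers) is right. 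Two small points. First, the $\eps$-slack when the suprema in (\ref{eq:gamma-shatter}) are not attained is genuinely absorbable, but it deserves one more line: with separation $4t-2\eps$ you get $d_H(y,y')\le (1+\delta)m/4$ for $\delta=\delta(\eps/t)$ small, and since $h_2$ is continuous with $h_2(1/4)<1$ this still yields a universal $C>0$ uniformly in $m$ (rounding the radius up to an integer is harmless as long as $(1+\delta)/4\le 1/2$). Second, your closing aside that $\fat_{2t}(F)=0$ for $t\ge R/2$ is false exactly at $t=R/2$ (e.g.\ $F\supseteq\set{-R,R}^\X$ is $R$-shattered with shift $0$), but nothing in the proof depends on that remark.
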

{\bf Remark.}
The tightness of (\ref{eq:talag})
is trivially demonstrated by the example
$F=\set{-\gamma,\gamma}^n$.

\begin{lemma}[\citet{MR1965359}, Theorem 1]
\label{lem:MV}
For all $F\subseteq[-1,1]^\X$
and all probability measures $\mu$,
\beqn
\label{eq:MV}
\covn(F,L_2(\mu),t)
&\le&
\paren{\frac{2}{t}}^{C\fat_{c t}(F)},
\qquad
0<t<1,
\eeqn
where $C,c>0$
are universal constants.
\end{lemma}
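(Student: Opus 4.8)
The natural route is the argument of Mendelson and Vershynin, which I would organize in four stages. First I would discretize: fix a grid in $[-1,1]$ of spacing $\eta\asymp t$ and let $\tilde F$ be obtained by rounding every $f\in F$ to its nearest grid value, so that $\ninf{f-\tilde f}\le\eta/2$. Then a $(t/2)$-cover of $\tilde F$ is a $t$-cover of $F$, and since rounding cannot manufacture margin, $\fat_\gamma(\tilde F)\le\fat_{\gamma-\eta/2}(F)$; hence it suffices to bound $\covn(\tilde F,L_2(\mu),t/2)$ for a class whose values lie in a set of size $O(1/t)$, with the shattering scale still $\Theta(t)$.

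Next I would reduce to a finite set. Let $\set{f_1,\ldots,f_N}\subseteq\tilde F$ be a maximal $(t/2)$-separated family in $L_2(\mu)$; it is enough to bound $N$. Sample $x_1,\ldots,x_n$ i.i.d.\ from $\mu$ with $n=\ceil{Ct^{-4}\log N}$. For each fixed pair $i\neq j$ the variables $(f_i(x_\ell)-f_j(x_\ell))^2\in[0,4]$, $\ell\in[n]$, have mean at least $t^2/4$, so Hoeffding together with a union bound over the $\binom{N}{2}$ pairs shows that with positive probability $\nmsc{f_i-f_j}{2}{\mu_n}\ge t/4$ for all $i\neq j$, where $\mu_n$ is the uniform measure on $\X_n:=\set{x_1,\ldots,x_n}$. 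Fixing such points, and using that restriction does not increase the fat-shattering dimension, $N$ is at most the number of distinct vectors in $\tilde F(\X_n)\subset\R^n$.

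The heart of the matter is then to bound $\abs{\tilde F(\X_n)}$ by a quantity of the form $(Cn/t)^{d}$ with $d=\fat_{ct}(F)$. Since $\tilde F(\X_n)$ takes $O(1/t)$ values per coordinate and has fat-shattering (pseudo-)dimension $d$ at scale $\Theta(t)$, a Pajor/Haussler-style downward-shifting count should give $\abs{\tilde F(\X_n)}\le\sum_{i\le d}\binom{n}{i}(C/t)^i\le(Cn/t)^d$. The delicate --- and genuinely hard --- point is to carry this count out \emph{at shattering scale $\Theta(t)$}: the easy routes, such as $\nmsc{f-g}{2}{\mu_n}^2\le 2\nmsc{f-g}{1}{\mu_n}$ followed by Haussler's dimension-free $L_1$ packing bound, or thresholding the functions at a single random level, only deliver $\fat$ at the lossy scale $\Theta(t^2)$. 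Recovering the correct scale requires working directly with $L_2$-separation and a multi-level extraction argument; I expect this step to be the main obstacle.

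Finally I would solve the resulting recursion. Combining the last two steps gives $\log N\le d\log(Cn/t)\le d\log\log N+d\log(Ct^{-5})$, and since the $d\log\log N$ term is lower order this yields $\log N\le Cd\log(2/t)$, i.e.\ $\covn(F,L_2(\mu),t)=N\le(2/t)^{C\fat_{ct}(F)}$. The losses incurred in the discretization and in the $t^{-5}$ inside the logarithm affect only the universal constants $C,c$, which is harmless.
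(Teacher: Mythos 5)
The paper offers no proof of this lemma: it is quoted verbatim from Mendelson and Vershynin (2003), Theorem 1, so there is nothing internal to compare against; I can only assess your reconstruction on its own terms. Steps 1, 2 and 4 of your outline (grid discretization, probabilistic extraction of a finite sample on which the packing survives, and solving the resulting inequality) are routine and essentially correct. But step 3 --- bounding the number of distinct restrictions of a $\set{0,\eta,2\eta,\dots}$-valued class on $n$ points by $(Cn/t)^{d}$ with $d=\fat_{ct}(F)$, i.e.\ with the shattering scale proportional to $t$ rather than $t^2$ --- is not an obstacle you can defer: it \emph{is} the theorem. You correctly observe that the cheap routes (passing to $L_1$ via $\nmsc{f-g}{2}{\mu_n}^2\le 2\nmsc{f-g}{1}{\mu_n}$, or thresholding at a single level) degrade the scale to $\Theta(t^2)$, but you offer no argument that recovers scale $\Theta(t)$. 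The actual proof does this by a genuinely different mechanism: from a $t$-separated family one extracts a \emph{separated tree} of functions and shows, by an iterated coordinate-extraction argument, that a tree of depth $\log N$ forces $\fat_{ct}(F)\gtrsim \log N/\log(2/t)$ directly --- a statement with no dependence on $n$ at all. Leaving that construction as ``the main obstacle'' leaves the proof with no content beyond what was already known before Mendelson--Vershynin.

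There is also a quantitative problem with step 4 even if step 3 were granted in the form you state. With $n=\ceil{Ct^{-4}\log N}$ you obtain $\log N\le d\log\log N+d\log(Ct^{-5})$, and this recursion does \emph{not} always resolve to $\log N\le Cd\log(2/t)$: when $d$ is much larger than any power of $1/t$ (which is possible, e.g.\ $F=[-1,1]^{[n]}$ for huge $n$ at fixed $t$), the binding case is $\log N\le 2d\log\log N$, whose solution carries an extra $\log d$ factor. That is exactly the kind of parasitic logarithm that the Mendelson--Vershynin theorem was designed to eliminate, and it is eliminated precisely because their core combinatorial bound is $n$-free, so no such recursion ever arises. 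In short: the discretization and sampling scaffolding is fine, but the load-bearing step is missing and the fallback recursion would not deliver the stated dimension-free bound.
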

\begin{remark}
\label{rem:vers}
The following example
due to
\citet{vers-private} shows that
(\ref{eq:MV}) 
is tight.
Take $\X=[n]$ and $F=[-1,1]^\X$.
Then, for all sufficiently small $t>0$,
we have $\fat_t(F)=n$. However, a simple
volumetric calculation shows that
$\covn(F,t)$ behaves as $(C/t)^n$
for small $t$, where $C>0$ is a constant.
\end{remark}

\begin{lemma}[\citet{Rudelson-Vershynin06}]
\label{lem:RV}
Suppose that $p\in[2,\infty)$,
$\mu$ is a probability measure on $\X$,
and $R>0$.
If
$F\subset L_p(\X,\mu)$
satisfies $\sup_{f\in F}\nrmp{f}{2p}\le R$,
then
\beq
\label{eq:RVp}
\log\covn(F,L_p(\mu),t)
&\le&
Cp^2\fat_{ct}(F)\log\frac{R}{ct},
\qquad
0<t<R;
\eeq
furthermore, for all $\eps>0$,
if
$\sup_{f\in F}\nrmp{f}{\infty}\le R$,
then
\beq
\label{eq:RV-infty}
\log\covn(F,L_\infty(\mu),t)
&\le&
Cv\log(Rn/vt)\log^\eps(n/v),
\qquad
0<t<R,
\eeq
where $n=|\X|$,
$v=\fat_{c\eps t}(F)$,
and
$C,c>0$
are universal constants.

\end{lemma}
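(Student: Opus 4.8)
Both displays are, modulo the naming of universal constants, the main metric-entropy theorems of \citet{Rudelson-Vershynin06}, so the plan is to follow that argument and check that the hypotheses line up with ours. The governing invariant throughout is the combinatorial (fat-shattering) dimension of $F$ at the appropriate scale, which is defined purely combinatorially and hence does not reference $\mu$; for (\ref{eq:RV-infty}) the set $\X$ is finite and plays the role of an $n$-coordinate index space, which is why $n=|\X|$ enters. One cannot simply bootstrap from the $L_2$ estimate of Lemma~\ref{lem:MV}: crude interpolation $\nrmp{f-g}{p}\le\nrmp{f-g}{2}^{1/(p-1)}\nrmp{f-g}{2p}^{(p-2)/(p-1)}$ would reduce an $L_p$-cover at scale $t$ to an $L_2$-cover at the exponentially finer scale $t(t/R)^{p-2}$, and hence to a fat-shattering dimension at that scale --- far too lossy. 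A direct argument in $L_p$ is therefore needed.

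The heart of the matter is a one-step extraction lemma in the spirit of \citet{Rudelson-Vershynin06}: if $\fat_{ct}(F)=v$, then a random subsample of size proportional to $v$ (of coordinates, or of points drawn according to $\mu$) reproduces the scale-$t$ geometry of $F$ with high probability, so that an $L_p$-cover constructed on the subsample lifts to an $L_p$-cover of all of $F$ at a comparable scale. The passage back from the subsample to $\X$ is exactly where the hypothesis $\sup_{f\in F}\nrmp{f}{2p}\le R$ is used: comparing the empirical $L_p$-norm on the random subsample with $\nrmp{\cdot}{p}$ amounts to a second-moment estimate on the $p$-th powers of the functions, which demands control of their $2p$-th moments, and the factor $p^2$ in (\ref{eq:RVp}) comes out of combining this moment comparison with the Khintchine and symmetrization constants. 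I would then iterate the extraction over the dyadic scales $t,2t,4t,\ldots,R$ --- a chain of depth $O\paren{\log(R/ct)}$ --- and sum the per-stage contributions to obtain (\ref{eq:RVp}).

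For the $L_\infty$ bound I would run the same extraction on the $n$-coordinate space, now chaining through an increasing sequence of auxiliary norms $L_{p_1}(\mu),L_{p_2}(\mu),\ldots$ with $p_j\uparrow\infty$: since an $s$-cover in $L_p(\mu)$ on $n$ points is automatically an $n^{1/p}s$-cover in $L_\infty(\mu)$, the chain must reach $p\gtrsim\log n$ before the factor $n^{1/p}$ is harmless. The number of stages and the growth rate of the $p_j$ are the free parameters, and the trade-off they govern is precisely the shape of (\ref{eq:RV-infty}): a crude choice costs a factor $\log^2 n$, whereas by slowing the chain down and charging the resulting slack to a finer fat-shattering scale one brings the loss down to $\log^\eps n$ at the price of replacing $\fat_{ct}(F)$ by $v=\fat_{c\eps t}(F)$.

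The main obstacle is the extraction lemma and its iteration: showing that a random subsample of size proportional to the combinatorial dimension faithfully represents the class at a prescribed scale, with the correct dependence on $p$ and on $R$, is the technical core of \citet{Rudelson-Vershynin06}, resting on delicate concentration, Sudakov-type minoration, and the duality of metric entropy of \citet{Artstein-duality-entropy}; this is considerably more involved than the downstream scale-by-scale bookkeeping. Accordingly, for the present paper we invoke (\ref{eq:RVp}) and (\ref{eq:RV-infty}) from \citet{Rudelson-Vershynin06} in the stated form.
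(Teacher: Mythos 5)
The paper gives no proof of this lemma at all --- it is imported verbatim from \citet{Rudelson-Vershynin06}, exactly as you do in your final paragraph, so your treatment matches the paper's. Your sketch of the Rudelson--Vershynin machinery (coordinate extraction at scale $\fat_{ct}(F)$, the role of the $2p$-th moment bound, and the $L_{p_j}$-chaining with the $\eps$-versus-scale trade-off behind $\log^\eps(n/v)$ and $v=\fat_{c\eps t}(F)$) is a fair account of the cited argument, but it is supplementary to what the paper requires.
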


\subsection{Covering numbers of linear and affine classes}
\label{sec:cov-num-lin}
Let $B\subset\R^d$ be the $d$-dimensional
Euclidean unit ball 
and
\beq
\label{eq:affineF}
F=\set{x\mapsto w\cdot x+b;
\nrm{w}\vee |b|\le R}
\eeq
be the collection of $R$-bounded affine functions
on $\X=B$.
\begin{remark}
\label{rem:lin-vs-aff}
There is a trivial reduction from
an $R$-bounded affine class in $d$
dimensions to a $2R$-bounded {\em linear}
class in $d+1$ dimensions, via the standard
trick of adding an extra dummy dimension.
This only affects the covering number bounds
up to constants.
\end{remark}
For $\X_n\subset B$,
$|\X_n|=n$,
define 
$
F(\X_n)=\evalat{F}{\X_n}
$,
and
endow $\X_n$ with the uniform measure
$\mu_n$. 
\citet[Theorem 4]{zhang2002covering} implies
the covering number estimate
\beq
\label{eq:zhang-infty}
\log\covn(F(\X_n),L_\infty(\mu_n),t)
&\le&
C\frac{R^2}{t^2}\Log\frac{nR}{t},
\qquad
t>0,
\eeq
where $C>0$ is a universal constant
(Zhang's result is more general
and allows to compute explicit constants).
We will use the following sharper bound:
\begin{lemma}
\label{lem:ramon}
\beq
\label{eq:ramon}
\log\covn(F(\X_n),L_\infty(\mu_n),t)
&\le&
C\frac{R^2}{t^2}\Log\frac{mt^2}{R^2},
\qquad
0<t<R,
\eeq
where 
$m=\min\set{n,d}$
and
$C>0$ is a universal constant.
\end{lemma}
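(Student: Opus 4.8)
The plan is to reduce to bounded linear functionals (Remark~\ref{rem:lin-vs-aff}), dualize the covering problem via \citet{Artstein-duality-entropy}, and close with Maurey's empirical method in a sharp form. After the reduction --- which costs only constants, $R\mapsto 2R$, $d\mapsto d+1$, $m\mapsto m+1$ --- it suffices to bound $\log\covn(T,L_\infty(\mu_n),t)$ where $T=\set{(w\cdot x_1,\dots,w\cdot x_n):\nrm{w}\le R}\subseteq\R^n$ and $x_1,\dots,x_n\in B$; we may also assume $\set{x_1,\dots,x_n}$ spans $\R^d$, since otherwise we pass to their span, which only decreases $d$ and $m$. Since $\mu_n$ is uniform on $n$ atoms, $L_\infty(\mu_n)$ is $\ell_\infty^n$; and $T=\Phi(RB)$ for $\Phi:w\mapsto(w\cdot x_i)_i$, whose ``unit ball'' is the symmetric polytope $K=\set{w\in\R^d:\max_i|w\cdot x_i|\le1}\supseteq B$. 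Because $\Phi(K)$ lies in the unit $\ell_\infty^n$-ball, any covering of $RB$ by translates of $tK$ pushes forward to a covering of $T$ by $\ell_\infty^n$-balls of radius $t$; hence $\log\covn(T,L_\infty(\mu_n),t)\le\log\covn(\tfrac{R}{t}B,K)$, where I now write $\covn(A,D)$ for the least number of translates of $D$ covering $A$.

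Next I would apply the duality of metric entropy of \citet{Artstein-duality-entropy} --- available because one of the bodies, $\tfrac{R}{t}B$, is a Euclidean ball --- to get $\log\covn(\tfrac{R}{t}B,K)\le C\log\covn(K^\circ,\tfrac{ct}{R}B)$ for universal $c,C>0$. The polar is the absolutely convex hull $K^\circ=\absconv\set{x_1,\dots,x_n}$: a symmetric polytope with $\le 2n$ vertices, all in $B$, spanning a subspace $E$ with $\dim E\le m=\min\set{n,d}$. It remains to cover this polytope by small Euclidean balls, and here I would combine two estimates at scale $s:=ct/R<1$. (i) Maurey's empirical method: every point of $\absconv\set{x_1,\dots,x_n}$ is within Euclidean distance $s$ of an average of $\ceil{s^{-2}}$ of the $\pm x_i$, and in the sharpened form (self-improving logarithm) this yields $\log\covn(K^\circ,sB)\le Cs^{-2}\log(2+ns^2)$. (ii) Since $K^\circ\subseteq E\cap B$ and orthogonal projection onto $E$ is an $\ell_2$-contraction, $\log\covn(K^\circ,sB)\le\dim E\cdot\log(3/s)\le m\log(3/s)$. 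Substituting $s=ct/R$ back through the chain and using $t<R$ (so $ns^2\lesssim nt/R$) gives
\[
\log\covn(T,L_\infty(\mu_n),t)\ \le\ C\min\set{\tfrac{R^2}{t^2}\Log\tfrac{nt}{R},\ m\Log\tfrac{R}{t}},
\]
and a short case split according to whether $t\gtrless R/m$ --- of the $x\le A\Log x\implies x\le cA\Log A$ type already used in the paper --- collapses this to the asserted $C\tfrac{R^2}{t^2}\Log\tfrac{mt}{R}$.

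The step I expect to be the real obstacle is (i) in its sharp form: the textbook Maurey estimate gives only $Cs^{-2}\log(2n)$, which after substitution reproduces $\Log\tfrac{nR}{t}$ inside the logarithm --- precisely Zhang's bound (\ref{eq:zhang-infty}) --- so the improvement genuinely hinges on the refined convex-hull entropy estimate (equivalently, a direct argument bounding how many distinct vertices an empirical average must use). The remaining points are bookkeeping: controlling the universal constants in the radius when invoking \citet{Artstein-duality-entropy}, and organising the case analysis so that the surviving logarithm is governed by $m=\min\set{n,d}$ and not merely by $n$. (A softer route --- the dimension-free estimate $\fat_{ct}(\cdot)\le CR^2/t^2$ for bounded linear classes together with Lemma~\ref{lem:MV} and the Rudelson--Vershynin $L_\infty$ bound --- also produces a bound of this shape, but retains a spurious $\Log^\eps$ factor and the count $n$ in place of $m$.)
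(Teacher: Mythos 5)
Your overall route --- reduce to a linear class, identify the $L_\infty(\mu_n)$ covering problem as $\covn(\tfrac{R}{t}B,K)$ for the polytope $K=\set{w:\max_i|w\cdot x_i|\le1}$, dualize via Artstein--Milman--Szarek to $\covn(K^\circ,\tfrac{ct}{R}B)$ with $K^\circ=\absconv\set{x_1,\ldots,x_n}$, and finish with the self-improving form of Maurey's empirical method --- is exactly the paper's, and you correctly flag the sharpened Maurey estimate as the crux. The gap is in your final assembly: the minimum of your two estimates does not imply the stated bound. Write $u=R/t$ and $m=\min\set{n,d}$. Estimate (i) gives $Cu^2\Log(n/u)$, estimate (ii) gives $m\Log(3u)$, and the target is $Cu^2\Log(m/u)$. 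Now fix $u$, take $d=m\gg u^2$, and take $n$ super-polynomial in $m$ (e.g.\ $n=m^m$, with the points spanning $\R^d$): then (i) overshoots the target by a factor of order $\log n/\log m$ and (ii) by a factor of order $mt^2/R^2$ (up to logs), both unbounded, so no case split on $t\gtrless R/m$ --- or on anything else --- can recover a universal constant. The volumetric bound $m\Log(3/s)$ is only useful in the regime $m\lesssim u$, where $\Log(mt/R)=O(1)$ and the target degenerates to $Cu^2$ anyway; it cannot substitute for having $m$ rather than $n$ inside the Maurey logarithm.

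What is needed --- and what the paper does --- is to arrange \emph{before} applying Maurey that the vertex count of the convex hull is $m$ rather than $n$: the paper first reduces to the case $d\ge n$ (so that, after an infinitesimal perturbation making $X$ linearly independent, $\absconv(X)$ has $2m=2n$ vertices) and only then invokes the sharpened bound $(c+cms^2/r^2)^{\ceil{r^2/s^2}}$ from Vershynin's book. Your reduction goes the opposite way: passing to the span forces $d\le n$, which caps the dimension but leaves all $n$ vertices in play, and as shown above that is not enough. Two mitigating remarks: your estimate (i) alone already proves the lemma with $n$ in place of $m$, and that weaker version is all that is used in the proof of Theorem~\ref{thm:max-fat-lin-R} (which invokes the lemma with $m=\ell$, the number of points); and your push-forward inequality $\covn(T,L_\infty(\mu_n),t)\le\covn(\tfrac{R}{t}B,K)$ and the duality step are both correct as stated.
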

\begin{proof}
The result is folklore knowledge, but
we provide a proof for completeness.

We argue that there is no loss of generality
in assuming
$d\ge n$.
Indeed, if $n>d$,
then $X$
is spanned by some $X'=\set{x_1',\ldots,x_d'}\subset B$
and $F\subset\operatorname{span}(X')$
is also a $d$-dimensional set. Thus,
we assume $d\ge n$ henceforth.
Via a standard infinitesimal perturbation,
we can assume that $X$ is a linearly
independent set (i.e., spans $\R^m$).
If we treat $X$ as an $m\times d$
matrix, then $F=XB$, which means that
$F$ is an ellipsoid.
We are interested in $\ell_\infty$
covering numbers of $F$.

Let $K\subset\R^d$ be such that
$XK=L$, where $L=B_\infty^m$ is
the unit cube. 
(The existence of a $K$
such that 
$XK\subset L$
is obvious, but because we assumed that
$X$ spans $\R^m$, every point in $[-1,1]^m$
has a pre-image under $X$.)
Let us compute the polar
body $K^\circ$, defined as
\beq
K^\circ
=\set{
u\in\R^d:
\sup_{v\in K}v\cdot u\le 1
}.
\eeq
We claim that
\beq
K^\circ
=
\absconv(X)
=:
\set{
\sum_{i=1}^m\alpha_i x_i;
\sum \abs{\alpha_i}\le1
}.
\eeq

Indeed, consider a
$
z=\sum_{i=1}^m\alpha_i x_i
\in 
\absconv(X)
$. Then, for any $v\in K$, we have
\beq
v\cdot z
&=&
v\cdot
\sum_{i=1}^m\alpha_i x_i
\\
&=&
\sum_{i=1}^m\alpha_i (v\cdot x_i)
=
\sum_{i=1}^m|\alpha_i|\le1
\qquad
\implies z\in K^\circ
,
\eeq
where we have used
$|v\cdot x_i|\le 1$
(since $XK=L=B_\infty^m=[-1,1]^m$)
and H\"older's inequality.
This shows that
$
\absconv(X)
\subseteq
K^\circ
$.
On the other hand, consider
any $u\in K^\circ$. There is no
loss of generality in
assuming that $u$ is spanned by $X$,
that is,
$u=\sum_{i=1}^m\alpha_i x_i$,
for $\alpha_i\in\R$.
By definition of $u\in K^\circ$, we have
\beq
\sup_{v\in K}v\cdot u
&=&
\sup_{v\in K}v\cdot \sum_{i=1}^m\alpha_i
x_i
=
\sup_{v\in K} \sum_{i=1}^m\alpha_i
(v\cdot x_i)
\le 1.
\eeq
Now because $XK=[-1,1]^m$,
for each choice of $\alpha\in\R^m$,
there is a $v\in K$ such that
$|v\cdot x_i|=\sign(\alpha_i)$
for all $i\in[m]$. This shows that
we must have $\sum_{i=1}^m|\alpha_i|\le 1$,
and proves 
$
K^\circ
\subseteq
\absconv(X)
$.

It is well-known
(and easy to verify)
that covering numbers enjoy an affine invariance:
\beq
N(F,L) := N(XB,XK) = N(B,K),
\eeq
where $N(A,B)$,
for two sets $A,B$,
is the smallest number of copies of
$B$ necessary to cover $A$.
Now the seminal result of
\citet{Artstein-duality-entropy}
applies:
for all $t>0$,
\beq
\log N(B,tK)
\le
a
\log N(K^\circ,bt B),
\eeq
where $a,b>0$ are universal constants.

This reduces the problem to estimating the
$\ell_2$-covering numbers of
$\absconv(X)$.
The latter may be achieved 
via Maurey's method 
\citep
[Corollary 0.0.4
and
Exercise 0.0.6]
{MR3837109}:
the $t$-covering number
of
$\absconv(r X)$
under $\ell_2$
is at most
\beq
(c+cm t^2/r^2)^{\ceil{r^2/ t^2}},
\eeq
where $c>0$ is a universal constant.

\end{proof}

\subsection{Fat-shattering dimension of linear and affine classes}
\label{sec:affine}
In this section, $\X=\R^d$
and $B\subset\R^d$ denotes the Euclidean unit ball.
A function $f:\X\to\R$
is said to be {\em affine}
if it is of the form $f(x)=w\cdot x+b$,
for some $w\in\R^d$ and $b\in\R$,
where $\cdot$ denotes the Euclidean inner product. 

Throughout the paper, we have have referred to
$R$-bounded affine function classes as those
for which $\nrm{w}\vee |b|\le R$.
In this section, we define the larger class of
{\em $R$-semi-bounded} affine functions, as those
for which $\nrm{w}\le R$, but $b$ may be unbounded.
In particular, the covering-number results
(and the reduction to linear classes
spelled out in Remark~\ref{rem:lin-vs-aff})
do not apply to semi-bounded affine classes.

The following simple result may be of independent
interest.
\begin{lemma}
\label{lem:fat-faat}
Let $F\subset\R^\X$ be some
collection of 
functions
with the 
closure
property
\beqn
\label{eq:closure}
f,g\in F &\implies
(f-g)/2\in F.
\eeqn
Then, for all $\gamma>0$, we have
$
\fat_\gamma(F)=\faat_\gamma(F)
$
.
\end{lemma}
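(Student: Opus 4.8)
The inequality $\faat_\gamma(F)\le\fat_\gamma(F)$ holds for \emph{every} $F\subset\R^\X$ and needs no hypothesis: the $\faat_\gamma$-shattering condition is precisely the left-hand side of (\ref{eq:gamma-shatter}) evaluated at the single shift $r=0$, and the outer $\sup_{r\in\R^m}$ in (\ref{eq:gamma-shatter}) can only increase it. So the content of the lemma is the reverse inequality, and this is where the closure property (\ref{eq:closure}) enters. The plan is to take an arbitrary set $S=\set{x_1,\dots,x_m}$ that is $\gamma$-shattered by $F$ and show it is in fact $\gamma$-shattered \emph{at $0$}; applying this to a largest $\gamma$-shattered set then yields $\faat_\gamma(F)\ge\fat_\gamma(F)$.

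Fix a shift $r\in\R^m$ witnessing the $\gamma$-shattering of $S$ (by (\ref{eq:fatfat0}) such a maximizer exists; the only technical nuisance is that the inner $\sup_{f\in F}$ in (\ref{eq:gamma-shatter}) need not be attained, which is handled by the routine device of working with near-optimal witnesses and letting the slack tend to $0$, since $\min_y\sup_f\min_i y_if(x_i)$ is a fixed real number that is then $\ge\gamma'$ for every $\gamma'<\gamma$). Now fix a target pattern $y\in\set{-1,1}^m$. Invoking the shattering property at the pattern $y$ gives some $f_y\in F$ with $y_i\big(f_y(x_i)-r_i\big)\ge\gamma$ for all $i$, and invoking it at the \emph{opposite} pattern $-y$ (with the same $r$) gives some $f_{-y}\in F$ with $y_i\big(f_{-y}(x_i)-r_i\big)\le-\gamma$ for all $i$.

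The key step is then to set $g_y:=(f_y-f_{-y})/2$, which lies in $F$ by (\ref{eq:closure}). Subtracting the two displayed inequalities, the unknown offsets $r_i$ cancel, leaving $y_ig_y(x_i)=\tfrac12\big(y_i(f_y(x_i)-r_i)-y_i(f_{-y}(x_i)-r_i)\big)\ge\tfrac12(\gamma-(-\gamma))=\gamma$ for every $i\in[m]$. Hence $g_y$ realizes the pattern $y$ with margin $\gamma$ and shift $0$; since $y$ was arbitrary, $S$ is $\gamma$-shattered at $0$ by $F$, completing the argument.

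There is no genuinely hard step here; the one point to get right is that one must apply the shattering guarantee simultaneously to $y$ and to $-y$ so that the subtraction eliminates the (otherwise uncontrolled) shift $r$ — this is exactly the configuration that the hypothesis ``$(f-g)/2\in F$'' is designed to exploit. Affine classes, and more generally any class closed under $(f,g)\mapsto(f-g)/2$, satisfy this, which is why $\fat$ and $\faat$ coincide for them; the only care needed elsewhere is the harmless measure-theoretic bookkeeping around non-attained suprema mentioned above.
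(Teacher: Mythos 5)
Your proof is correct and follows essentially the same route as the paper's: for each sign pattern $y$ you pair the witness $f_y$ with the witness $f_{-y}$ for the opposite pattern and pass to $(f_y-f_{-y})/2$, which cancels the shift $r$ and realizes $y$ with margin $\gamma$ at shift $0$. Your extra remark about non-attained suprema is a legitimate (if minor) technicality that the paper's proof glosses over, and your limiting argument handles it correctly.
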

\begin{proof}
\newcommand{\upw}{{\hat{w}}}
\newcommand{\dnw}{{\check{w}}}
\newcommand{\upb}{{\hat{b}}}
\newcommand{\dnb}{{\check{b}}}
\newcommand{\upf}{{\hat{f}}}
\newcommand{\dnf}{{\check{f}}}

Suppose that some set $\set{ x_1,\ldots, x_k}$
is $\gamma$-shattered by $F$.
That means that there is an $r
\in\R^k
$
such that for all $y\in\set{-1,1}^k$,
there is a 
$f=f_y\in F$
for which
\beqn
\label{eq:ti-r}
\gamma&\le& y_i( 
f(x_i)
-r_i
),
\qquad i\in[k].
\eeqn
Now for any $y\in\set{-1,1}^k$,
let
$
\upf=f_y
$
and
$
\dnf=f_{-y}
$.
Then, for each $i\in[k]$, we have
\beq
\gamma
&\le& 
\phantom{-}
y_i(
\upf(x_i)
-r_i) 
,\\
\gamma
&\le& 
-y_i(
\dnf(x_i)
-r_i) 
.
\eeq
It follows that 
$
f=(\upf-\dnf)/2
$
achieves (\ref{eq:ti-r}), for the given $y$,
with $r\equiv 0$.
Now (\ref{eq:closure}) implies that
the function defined by $
f
$
belongs to $F$, which completes the proof.
\end{proof}

Now is well-known \citep[Theorem 4.6]{299098} that bounded {\em linear} functions
---
i.e., function classes 
on $B$
of the form 
$F=\set{x\mapsto 
w\cdot x; 
\nrm{w}\le R
}$,
also known as
{\em homogeneous hyperplanes}
---
satisfy $\fat_\gamma(F)\le (R/\gamma)^2$.
The discussion in
\citet[p. 102]{DBLP:journals/tcs/HannekeK19}
shows that the 
common approach of reducing
of the general (affine) case to the linear (homogeneous, $b=0$) case, via the addition of a ``dummy'' coordinate, incurs a large suboptimal factor in the bound.
\citet[Lemma 6]{DBLP:journals/tcs/HannekeK19}
is essentially an analysis of the fat-shattering
dimension of bounded affine functions.
Although this result contains a mistake (see Section~\ref{sec:discussion}),
much of the proof
technique can be salvaged:
\begin{lemma}
\label{lem:fat-hyp}
The 
semi-bounded affine
function class
on $B$
defined by
$F=\set{x\mapsto w\cdot x+b;
\nrm{ w}\le R
}$
in $d$ dimensions
satisfies
\beq
\fat_\gamma(F)
&\le&
\min\set{d+1,
\paren{\frac{\left(1+\sqrt{\frac{8}{\pi}}\right)R}{\gamma}}^2
},
\qquad 0<\gamma\le R.
\eeq
\end{lemma}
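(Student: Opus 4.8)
The plan is to prove the two upper bounds in $\min\set{d+1,(3R/\gamma)^2}$ separately; their minimum is then an upper bound on $\fat_\gamma(F)$. The bound $\fat_\gamma(F)\le d+1$ is immediate from the vector-space structure: $F$ sits inside the $(d+1)$-dimensional space of all affine functions on $\R^d$, hence has pseudo-dimension at most $d+1$, and since every $\gamma$-shattered set with $\gamma>0$ is a fortiori pseudo-shattered, $\fat_\gamma(F)\le d+1$.

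For the scale-sensitive bound $\fat_\gamma(F)\le(3R/\gamma)^2$ I would first invoke Lemma~\ref{lem:fat-faat}: the class $F$ enjoys the closure property (\ref{eq:closure}), because if $f=w_1\cdot x+b_1$ and $g=w_2\cdot x+b_2$ lie in $F$ then $(f-g)/2$ has weight vector of norm at most $(\nrm{w_1}+\nrm{w_2})/2\le R$. Thus $\fat_\gamma(F)=\faat_\gamma(F)$, and it suffices to bound the size $m$ of a set $S=\set{x_1,\dots,x_m}\subset B$ that is $\gamma$-shattered at $0$. For each $y\in\set{-1,1}^m$ fix a witness $f_y(x)=w_y\cdot x+b_y$ with $\nrm{w_y}\le R$ and $y_if_y(x_i)\ge\gamma$ for all $i$. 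The obstruction to the usual Rademacher symmetrization is the term $b_y\sum_i y_i$, which cannot be controlled since $b_y$ is unbounded and $y$-dependent. The remedy is to symmetrize only over \emph{balanced} sign patterns: assuming first that $m$ is even, draw $y$ uniformly among the $\binom{m}{m/2}$ vectors with $\sum_i y_i=0$. Then $b_y\sum_i y_i=0$ identically, so summing $y_if_y(x_i)\ge\gamma$ over $i$ yields $w_y\cdot\sum_i y_ix_i\ge m\gamma$, whence $\nrm{\sum_i y_ix_i}\ge m\gamma/R$ for every such $y$.

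Taking expectations and applying Jensen, $m\gamma/R\le\E\nrm{\sum_i y_ix_i}\le(\E\nrm{\sum_i y_ix_i}^2)^{1/2}$. Since $\sum_i y_i=0$ one has $\E[y_i^2]=1$ and $\E[y_iy_j]=-1/(m-1)$ for $i\ne j$; combined with $\sum_{i\ne j}\iprd{x_i}{x_j}=\nrm{\sum_i x_i}^2-\sum_i\nrm{x_i}^2\ge-\sum_i\nrm{x_i}^2$ and $\nrm{x_i}\le 1$, this gives $\E\nrm{\sum_i y_ix_i}^2\le\frac{m}{m-1}\sum_i\nrm{x_i}^2\le\frac{m^2}{m-1}$. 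Hence $m\gamma/R\le m/\sqrt{m-1}$, i.e.\ $m\le(R/\gamma)^2+1$. For odd $m$, delete one point of $S$ and apply the even case to obtain $m\le(R/\gamma)^2+2$; since $0<\gamma\le R$ forces $(R/\gamma)^2\ge 1$, in all cases $m\le 3(R/\gamma)^2\le(3R/\gamma)^2$. Combining with the $d+1$ bound completes the proof.

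The main obstacle is precisely the unbounded bias $b$: the standard reductions — adjoining a dummy coordinate to homogenize, or plain $\pm1$ Rademacher averaging — all fail because they leave an uncontrolled $b_y\sum_i y_i$ contribution (this is, essentially, the error in the cited prior arguments). The one delicate point is that restricting to balanced labelings annihilates this term \emph{identically}, not merely in expectation; after that, the estimate is the routine linear-class computation, with only an $O(1)$ correction coming from the mild negative correlation $\E[y_iy_j]=-1/(m-1)$ of the balanced signs.
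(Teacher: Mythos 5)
Your proof is correct, and while the overall skeleton (reduce to $\faat$ via Lemma~\ref{lem:fat-faat}, sum the shattering inequalities over $i$, symmetrize over random signs, apply Jensen) matches the paper's, you handle the crux of the lemma --- the unbounded bias $b$ --- in a genuinely different way. The paper first argues that any shattering witness can be realized with $|b|\le 2R$ and then absorbs the resulting term $2R\,\E\bigl|\sum_i y_i\bigr|\le 2R\sqrt{k}$ into the Rademacher average; you instead symmetrize only over \emph{balanced} sign patterns, which kills $b\sum_i y_i$ identically and removes any need to bound $b$ at all. The price is the slightly more delicate second-moment computation with $\E[y_iy_j]=-1/(m-1)$, which you carry out correctly, and your final estimate $m\le (R/\gamma)^2+2$ is in fact sharper than the paper's $9(R/\gamma)^2$: it shows the bias costs only an additive $O(1)$ over the homogeneous bound, rather than a constant factor. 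Your route also sidesteps the one step the paper asserts without proof (the WLOG reduction to $|b|\le 2R$). For the dimension-dependent part you use the pseudo-dimension of the ambient $(d+1)$-dimensional vector space of affine functions, whereas the paper uses the VC dimension of $\sign(F)$; both are standard and correct. The parity fix for odd $m$ and the use of $\gamma\le R$ to absorb the additive constant are both handled properly.
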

\begin{proof}
Since $F$ satisfies (\ref{eq:closure}),
it suffices to consider $\faat_\gamma(F)$,
and so the shattering
condition 
simplifies to
\beqn
\label{eq:ti}
\gamma&\le& y_i( w\cdot x_i+b),
\qquad i\in[k].
\eeqn
Now $\faat_\gamma(F)$
is 
always
upper-bounded by the
VC-dimension of the corresponding
class thresholded at zero,
i.e., $\sign(F)$.
For $d$-dimensional inhomogeneous hyperplanes, the latter is exactly $d+1$
\citep[Example 3.2]{mohri-book2012}.
Having dispensed with the dimension-dependent part in the bound,
we how focus on the 
$R$-dependent one.

Let us
observe,
as in \citet[Lemma 6]{DBLP:journals/tcs/HannekeK19},
that for $\nrm{ x_i}\le1$
and $\nrm{ w}$, $\gamma\le R$,
one can always realize (\ref{eq:ti})
with $|b|\le2R$; which is what we shall
assume, without generality, henceforth.
Summing up the $k$ inequalities 
in (\ref{eq:ti})
yields
\beq
k\gamma &\le&
 w\cdot\sum_{i=1}^k y_i x_i + b\sum_{i=1}^ky_i
\le
R\nrm{
\sum_{i=1}^k y_i x_i
}
+
2R\abs{
\sum_{i=1}^ky_i
}.
\eeq
Letting $y$ be drawn uniformly from $\set{-1,1}^k$
and taking expectations, we have
\beq
k\gamma &\le&
R\E\nrm{
\sum_{i=1}^k y_i x_i
}
+
2R\E\abs{
\sum_{i=1}^ky_i
}
\le
R\sqrt{\E\nrm{
\sum_{i=1}^k y_i x_i
}^2}
+
2R\sqrt{\E\paren{
\sum_{i=1}^ky_i
}^2}
\\
&=&
R\sqrt{\sum_{i=1}^k\nrm{
  x_i
}^2}
+
2R\sqrt{\sum_{i=1}^k
y_i
^2}
\le
3R\sqrt{k}.
\eeq
Isolating $k$ on the left-hand side
of the inequality proves the claim $k\leq \left(\frac{3R}{\gamma}\right)^2$.

Following a referee's suggestion,
we improve the constant as follows. Note that
\beq
\E\abs{
\sum_{i=1}^ky_i
}
=
\frac{1}{2^k}\sum^k_{i=0}\binom{k}{i}|k-2i|
=
\frac{k}{2^{k-1}}
\binom{k-1}{\floor{\frac{k}{2}}}
\leq 
\sqrt{\frac{2}{\pi}}\frac{k}{\sqrt{k+\frac{1}{2}}},
\eeq 
where the inequality follows from binomial coefficient estimate via Stirling's approximation.
Thus,
\beq
k\gamma &\le&
R\sqrt{k}+2R\sqrt{\frac{2}{\pi}}\frac{k}{\sqrt{k+\frac{1}{2}}}
\le
R\sqrt{k}+2R\sqrt{\frac{2}{\pi}}\sqrt{k},
\eeq
which proves that $k\leq \left(\frac{\left(1+\sqrt{\frac{8}{\pi}}\right)R}{\gamma}\right)^2$.
\end{proof}

\subsection{Concavity
miscellanea}
\label{sec:conv}

The results below are routine exercises in
differentiation and Jensen's inequality.

\begin{lemma}
For $u>0$,
the
function
$x\mapsto x\log(u/x)$
is concave on $(0,\infty)$.
\end{lemma}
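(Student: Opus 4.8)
The plan is to verify concavity of $x\mapsto x\log(u/x)$ on $(0,\infty)$ by the standard second-derivative test. First I would set $h(x) = x\log(u/x) = x\log u - x\log x$ and differentiate: $h'(x) = \log u - \log x - 1 = \log(u/x) - 1$, using that the derivative of $x\log x$ is $\log x + 1$. Differentiating once more gives $h''(x) = -1/x$, which is strictly negative for all $x \in (0,\infty)$. Since $h$ is twice differentiable on the open interval $(0,\infty)$ and $h'' < 0$ throughout, $h$ is (strictly) concave there, which is the claim.

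The only point worth a word of care is that the statement fixes $u > 0$, so $\log u$ is a well-defined constant and the term $x\log u$ is linear in $x$ (hence contributes nothing to $h''$); the entire curvature comes from $-x\log x$. No subtlety arises at the endpoints since the domain is the open half-line. I expect no real obstacle here — this is a one-line computation — so the ``hard part,'' such as it is, is merely recording the derivative of $x\log x$ correctly and noting the sign of $h''$.

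\begin{proof}
Fix $u>0$ and set $h(x)=x\log(u/x)=x\log u-x\log x$ for $x>0$. Then
\[
h'(x)=\log u-(\log x+1)=\log(u/x)-1,
\qquad
h''(x)=-\frac1x<0
\]
for all $x\in(0,\infty)$. Hence $h$ is concave on $(0,\infty)$.
\end{proof}
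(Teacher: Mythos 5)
Your proof is correct and matches the paper's (implicit) approach: the paper dismisses this lemma as a routine exercise in differentiation, and your second-derivative computation $h''(x)=-1/x<0$ is exactly that routine argument, carried out cleanly.
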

\begin{corollary}
\label{cor:xlogn/x}
For all $u>0$
and $v_i>0$, $i\in[k]$,
\beq
\sum_{i=1}^k v_i\log(u/v_i)
&\le&
\paren{\sum v_i}\log\frac{uk}{\sum v_i}.
\eeq
\end{corollary}

\begin{lemma}
\label{lem:log^eps}
For 
$0\le\eps\le\log2$
and
$u\ge2$,
the
function
$x\mapsto x\log^{1+\eps}(u/x)$
is concave on $[1,\infty)$.
It follows that
for $\eps,u$ as above
and $v_i\ge1$, $i\in[k]$,
\beq
\sum_{i=1}^k v_i\log^{1+\eps}(u/v_i)
&\le&
\paren{\sum v_i}\log^{1+\eps}\frac{uk}{\sum v_i}.
\eeq
\end{lemma}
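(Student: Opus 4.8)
The plan is to establish the concavity of $x\mapsto x\log^{1+\eps}(u/x)$ on $[1,\infty)$ by computing its second derivative and checking its sign under the hypotheses $0\le\eps\le\log2$ and $u\ge2$; the Jensen-type inequality for the $v_i$ then follows exactly as Corollary~\ref{cor:xlogn/x} follows from the preceding lemma.

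First I would set $h(x)=x\log^{1+\eps}(u/x)$ and write $L=\log(u/x)$, noting $L'=-1/x$. Differentiating once gives $h'(x)=L^{1+\eps}-(1+\eps)L^{\eps}$, and differentiating again yields $h''(x)=-(1+\eps)L^{\eps}/x+(1+\eps)\eps L^{\eps-1}/x = \frac{(1+\eps)L^{\eps-1}}{x}\bigl(\eps-L\bigr)$. So $h''(x)\le0$ exactly when $\eps\le L=\log(u/x)$ (assuming $L\ge0$, i.e.\ $x\le u$; for $x>u$ one should note $h$ is not of interest or handle the sign of $L^{\eps-1}$ separately). Since we only need concavity on the relevant range, the key observation is that for $x\in[1,\infty)$ with $x\le u$, we have $L=\log(u/x)\le\log u$, which is the wrong direction — so the real constraint is a \emph{lower} bound on $L$. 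Re-examining: we need $\log(u/x)\ge\eps$ for \emph{all} $x$ in the domain over which Jensen will be applied. This is where the hypotheses enter: the $v_i$ and $\sum v_i$ all lie in ranges where the argument of the logarithm stays $\ge e^\eps$; since $\eps\le\log 2$ means $e^\eps\le 2\le u$, and the relevant points $x$ satisfy $x\le u/2$ or similar, one gets $\log(u/x)\ge\log 2\ge\eps$. I would pin down the precise sub-interval of $[1,\infty)$ on which concavity is actually needed (it suffices that $h$ be concave on the convex hull of the points $\{v_1,\dots,v_k,\sum v_j/k\}$, or more simply on an interval containing all $v_i$ and their average), verify $\log(u/x)\ge\eps$ there using $u\ge 2\ge e^{\log 2}\ge e^\eps$, and conclude $h''\le0$.

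Given concavity of $h$ on the appropriate interval, the inequality $\sum_{i=1}^k v_i\log^{1+\eps}(u/v_i)\le\bigl(\sum v_i\bigr)\log^{1+\eps}\frac{uk}{\sum v_i}$ is immediate: apply Jensen to the concave $h$ with the uniform weights $1/k$ at the points $v_1,\dots,v_k$, obtaining $\frac1k\sum h(v_i)\le h\!\left(\frac1k\sum v_i\right)$, then multiply through by $k$ and unfold the definition of $h$, noting $k\cdot h(\bar v)=k\bar v\log^{1+\eps}(u/\bar v)=\bigl(\sum v_i\bigr)\log^{1+\eps}\frac{uk}{\sum v_i}$ with $\bar v=\frac1k\sum v_i$. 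This mirrors verbatim the derivation of Corollary~\ref{cor:xlogn/x} from its lemma.

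The main obstacle I anticipate is getting the sign of $h''$ right and correctly identifying the interval of concavity: the naive statement ``concave on $[1,\infty)$'' is only literally true once one checks that $\log(u/x)\ge\eps$ fails for large $x$, so the honest version is that $h$ is concave on $[1,u e^{-\eps}]$ (and, depending on the $\eps^{\eps-1}$-type edge behavior, one must also be slightly careful near $x=u$ where $L\to0$). Since in all the applications (Theorems~\ref{thm:max-fat-RV}) the quantities $v_i$ satisfy $v_i\ge1$ and $uk/\sum v_i$-type arguments keep the logarithm bounded away from $0$ by at least $\log 2\ge\eps$, restricting attention to that sub-interval is harmless; I would simply state the lemma with the domain $[1,\infty)$ but note in the proof that what is actually used — and all that is needed — is concavity on the region where the logarithm's argument exceeds $e^\eps$, which the hypothesis $u\ge2$ together with the constraint on the $v_i$ guarantees.
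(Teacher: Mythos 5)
Your computation is correct and is precisely the ``routine exercise in differentiation and Jensen's inequality'' that the paper intends (no written proof is given there), but you have in fact uncovered a genuine defect in the statement, and your diagnosis of it is accurate. Writing $h(x)=xL^{1+\eps}$ with $L=\log(u/x)$, one indeed gets $h''(x)=\frac{(1+\eps)L^{\eps-1}}{x}(\eps-L)$, so $h$ is concave only where $L\ge\eps$, i.e.\ on $[1,ue^{-\eps}]$; on $(ue^{-\eps},u)$ it is strictly convex, and for $x>u$ the expression is not even defined for fractional $1+\eps$. Consequently the displayed inequality also fails without an extra hypothesis: for $u=100$, $\eps=\log 2$, $k=2$, $v_1=50$, $v_2=100$, the left side is $50(\log 2)^{1+\log 2}\approx 26.9$ while the right side is $150\paren{\log\tfrac{4}{3}}^{1+\log 2}\approx 18.2$. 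Your repair is the right one: assume $\log(u/v_i)\ge\eps$ for every $i$ (guaranteed, e.g., by $u/v_i\ge 2$, since $\eps\le\log 2$), so that all $v_i$ --- and hence their average $\bar v$ --- lie in $[1,ue^{-\eps}]$, where Jensen applies and gives $\sum_i h(v_i)\le k\,h(\bar v)=\paren{\sum_i v_i}\log^{1+\eps}\frac{uk}{\sum_i v_i}$. The one loose end is the point you flag but do not close: in the invocation inside the proof of Theorem~\ref{thm:max-fat-RV} one has $u=R\ell/\gamma$, so the required condition is $v_i\le (R\ell/\gamma)e^{-\eps}$; since $e^{-\eps}\ge 1/2$ and $R/\gamma>1$, this follows from $v_i\le\ell/2$, which is not automatic and must be verified or arranged (for instance, if some $v_i>\ell/2$ then $\ell<2D$ and the theorem's conclusion holds trivially). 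With the concavity domain corrected and that hypothesis made explicit, your argument is complete; as written in the paper, the lemma is false on the stated domain.
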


\end{document}